\numberwithin{equation}{section}
\newcommand{\e}{\mathbb E}
\newcommand{\p}{\mathbb P}
\newcommand{\1}{\mathbbm 1}
\newcommand{\eps}{\varepsilon}
\newcommand{\la}{\langle}
\newcommand{\ra}{\rangle}
\newcommand{\odd}{\mathrm{odd}}
\newcommand{\Z}{\mathbb{Z}}
\theoremstyle{definition}
\newtheorem{theorem}{Theorem}[section]
\newtheorem{proposition}[theorem]{Proposition}
\newtheorem{lemma}[theorem]{Lemma}
\newtheorem{remark}[theorem]{Remark}
\newtheorem{definition}[theorem]{Definition}
\newtheorem{assumption}[theorem]{Assumption}
\begin{document}
\title{Disorder Chaos in Short-Range, Diluted,
	and L\'evy Spin Glasses}

 \author{Wei-Kuo Chen\thanks{Email: wkchen@umn.edu. Partly supported by NSF grants DMS-1752184 and DMS-2246715 and Simons Foundation grant 1027727 } \and  Heejune Kim \thanks{ Email: kim01154@umn.edu. Partly supported by NSF grants DMS-1752184 and DMS-2246715}  \and Arnab Sen \thanks{Email: arnab@umn.edu.Partly supported by Simon Foundation MP-TSM-00002716}} 

\maketitle 

\begin{abstract}
 In a recent breakthrough \cite{Chatterjee}, Chatterjee proved site disorder chaos in the  Edwards-Anderson (EA) short-range spin glass model \cite{Edwards1975} utilizing the Hermite spectral method. In this paper, we demonstrate the further usefulness of this Hermite spectral approach by extending the validity of site disorder chaos in three related spin glass models.
	The first, called the mixed even $p$-spin short-range model, is a generalization of the EA model where the underlying graph is a deterministic bounded degree hypergraph consisting of hyperedges with even number of vertices. The second model is the diluted mixed $p$-spin model, which is allowed to have hyperedges with both odd and even number of vertices. For both models, our results hold under general symmetric disorder distributions. 
The main novelty of our argument is played by an elementary algebraic equation for the Fourier-Hermite series coefficients for the two-spin correlation functions. It allows us to deduce necessary geometric conditions to determine the contributing coefficients in the overlap function, which in spirit is the same as the crucial Lemma 1 in \cite{Chatterjee}. Finally, we also establish 
disorder chaos in the L\'evy model with stable index $\alpha \in (1, 2)$. 
	
\end{abstract}



\section{Introduction and Main Results}



The study of chaos in spin glasses began from \cite{BM87, FH86}, which is concerned with the sensitivity of the systems subject to a small change in some external parameters, such as the temperature, disorder, or external field.
One typical way to capture chaotic phenomenon is to measure the site overlap between two configurations independently drawn from an unperturbed and a perturbed system, respectively. The expectation is that with the perturbation, the site overlap is concentrated near zero if the model is symmetric with respect to the sign change of the spin configuration and is concentrated near a non-zero value if the symmetry is broken, for example, due to the presence of the external field.
This is in sharp contrast to the ``lack of self-averaging'' behavior of the site overlap in the low temperature phase of many models if the two spin configurations are independently sampled from the same system (see, e.g., Example 1 in \cite{Panchenko08}). For a modern survey of chaos in spin glasses in the physics literature, see \cite{Rizzo09}.

Establishing disorder chaos in the Edwards-Anderson (EA) short-range spin glass model \cite{Edwards1975} has received great attention recently. 
In \cite{chatterjee2008chaos}, Chatterjee established a weak form of disorder chaos for the site overlap in the sense that it is concentrated around its Gibbs average; an alternative proof was given in Chen-Panchenko \cite{CP14}.  In addition to the site overlap, which aims to capture chaos through spin interactions on vertices,  the bond overlap that measures chaos through the spin interactions on edges is another quantity of interest and it was shown in \cite{chatterjee2008chaos} that the expectation of the bond overlap is strictly positive.  In a related direction, Arguin-Newman-Stein \cite{ANS} made a connection between bond disorder chaos and incongruent ground states, and Arguin-Hanson \cite{AH} reproved positivity of the expectation of the bond overlap in \cite{chatterjee2008chaos} in three different perspectives.

In a recent breakthrough, chaos in disorder for the site overlap at zero temperature in the EA model was settled by Chatterjee \cite{Chatterjee} through a beautiful adaptation of the Hermite spectral method. The main ingredient of his proof is a quantitative version, \cite[Theorem 2]{Chatterjee}, of a statement regarding an expected glassy behavior: ``relative orientations of spins with large separation are sensitive to small changes in the bond strengths'',  quoted from Bray-Moore \cite{BM87}. Following the validity of the site disorder chaos, a number of long-standing conjectured spin glass behaviors were answered quantitatively.

In the present paper, we aim to investigate chaos in disorder in three spin glass models. The first, called the mixed $p$-spin short-range model, is a generalization of the EA model, whose Hamiltonian is defined on a mixture of hypergraphs of varying lengths $p$ and the spin interactions are placed along the hyperedges. The second model called the diluted mixed $p$-spin model is a randomization of the first model, where each $p$-hyperedge is chosen independently with the same success probability. We assume that the disorder distribution is symmetric around zero. Therefore, it can always be written as $\rho(J)$ for an odd measurable function $\rho$ and $J\sim N(0,1)$. For any $t>0,$ we perturb the disorder in the original systems in continuous and discrete manners, namely, $$\rho(e^{-t}J+\sqrt{1-e^{-2t}}J')$$ and $$B\rho(J)+(1-B)\rho(J')=\rho(BJ+(1-B)J'),$$ where $J'$ is an independent copy of $J$ and $B$ is Bernoulli with head probability $e^{-t}$ independent of $J,J'.$  Under these two perturbations, we establish chaos in disorder for the site overlap in the mixed {\it even} $p$-spin short-range model that generalizes the earlier result  \cite{Chatterjee} in the EA model. Additionally, we show that the same holds in the diluted mixed $p$-spin model, 
which was one of the major open problems in the class of mean-field models and to the best of our knowledge, it was previously known only for the diluted even $p$-spin model with large connectivity, see Chen-Panchenko \cite{CP_2018}. Our main result in this direction holds without this constraint and most importantly, it works for any mixture including odd $p$. 

Our approach to establishing site disorder chaos in the two aforementioned models is based on the Hermite spectral method; the key ingredient is played by an elementary algebraic equation stated in Lemma \ref{lem: coefficient zero} for the Fourier-Hermite series coefficients associated with the two-point spin correlation function, which allows us to deduce certain geometric conditions, see Propositions \ref{prop: lowfrequencydies} and \ref{prop: hypertree lemma}, to identify the nonzero (contributing) Fourier coefficients. These propositions are similar in spirit to \cite[Lemma 1]{Chatterjee}. It was derived in the EA model through flipping the signs of the spins as well as the disorders in the Hamiltonian depending on the graph structure associated with the given two vertices heavily relying on the fact that the EA model in the absence of an external field is symmetric. However, our algebraic Lemma~\ref{lem: coefficient zero} does not depend too much on the geometry of the underlying graph. As such, it substantially broadens the scope of applicability of the Hermite spectral method in deducing disorder chaos in a wider range of spin glass models including diluted mixed $p$-spin models.

The third model of interest is the L\'evy spin glass initially introduced by Cizeau-Bouchaud \cite{Cizeau1993,Cizeau1994}. Its Hamiltonian is essentially the same as the Sherrington-Kirkpatrick (SK) model except that its disorder possesses a power-law density of order $1<\alpha<2$ so that it only has $\alpha'$-moment for any $\alpha'<\alpha.$ Due to this formulation, it is expected that only the edges with heavy weights will be responsible for the behavior of the system and as a result, the L\'evy model should behave like diluted models in many aspects, see \cite{Cizeau1993} for physicists' predictions. Indeed, it is the case that one can relate the L\'evy model to the diluted $2$-spin model to understand its free energy and fluctuation, see Chen-Kim-Sen \cite{CKS}. It is then an intriguing question to ask whether chaos in disorder holds in the L\'evy model under the same mechanism as the diluted model or not. To this end,  we let the disorder be an arbitrary symmetric heavy-tailed random variable and write the disorder as $\rho(J)$ for some odd function $\rho$ and again $J\sim N(0,1)$ such that $\rho(J)$ belongs to the domain of attraction of the symmetric stable distribution of index $\alpha$. Under the continuous perturbation as before, we show that the L\'evy model exhibits disorder chaos. Somewhat unexpectedly, while one can understand the free energy in the L\'evy model via the diluted model, it is by no means clear if one can use our chaos result for the diluted model to understand the same objective in the L\'evy model as the Gibbs measures are much more sensitive subject to small perturbation. As we shall see, our proof makes use of the Hermite spectral method again, but instead of counting the nonzero Fourier-Hermite coefficients under the same geometric conditions as before, it is in fact more useful to control the overlap through the treatment in the SK model, see \cite{Chatterjeebook,chen2019order}.

\subsection{Mixed Even $p$-spin short-range Model}
Let $\Delta \ge 2$. For $2\leq p\leq \Delta,$ denote by ${[N]\choose p}$ the collection of all choices of $p$ distinct elements from $[N]$. Let $G=([N],E)$ be a hypergraph with a mixture of hyperedges \begin{equation}\label{mixed edge set}
     E=\bigcup_{2\le p \le \Delta}E_p,
\end{equation} for some $E_p \subseteq {[N]\choose p}$.
An ordinary graph would correspond to $\Delta=2$, which is the case of 
 the EA model.
A \textit{ball of radius $r\in \mathbb Z_+$ centered at $v\in [N]$} is defined in the usual way as $B_r(v)= \{v'\in[N]: d(v,v') \le r \}$, where $d(v,v')$ is the hypergraph distance between the vertices $v$ and $v'$ defined in Section \ref{sec: prelim hypergraphs}.
We assume one of the following growth assumptions on $G$.
\begin{assumption}[Volume growth assumptions on $G$]\label{edge set assumption: mixed p spin}
\indent
\begin{enumerate}[label=(\roman*)]
   \item (Polynomial growth)  For some positive constants $A$ and $\theta$ independent of $N$, 
    \[ \max_{i\in [N]} |B_r(i)|\le A r^\theta, \ \ r\in \mathbb N.\]
    \item (Exponential growth) For some  constants $A$ and $\gamma > 1$ independent of $N$, 
    \[ \max_{i\in [N]} |B_r(i)|\le A\gamma^{ r}, \ \ r\in \mathbb N.\]
\end{enumerate}
\end{assumption}

For each hyperedge $e\in E$, let $J_e$ be an i.i.d.\ copy from the standard normal distribution.
For each $2\le p\le \Delta$,
let $\rho_p:\mathbb R\to\mathbb R$ be an odd function.
With the notation \begin{equation}\label{eq: product notation}
    \sigma_e=\prod_{i\in V(e)} \sigma_i, \quad \sigma\in\{\pm 1\}^N,
\end{equation} we consider the Hamiltonian \begin{equation}\label{hamiltonian}
    H_J(\sigma)=\sum_{2\le p \le \Delta}H_J^{(p)}(\sigma), \quad \sigma \in \{\pm 1\}^N,
\end{equation} where \[H_J^{(p)}(\sigma):=\sum_{e\in E_p}\rho_p(J_e)\sigma_e\] for $2\le p \le \Delta$.
Note that we can represent any symmetric disorder as $\rho(J)$.
The \textit{Gibbs measure} associated to (any) Hamiltonian $H_J$ at inverse temperature $\beta>0$ is given by \begin{equation}\label{Gibbs measure}
    G_{J,\beta}(\sigma)=\frac{e^{\beta H_J(\sigma)}}{Z_N}, \quad \sigma \in \{\pm 1\}^N,
\end{equation} where $Z_N=Z_N(J,\beta)=\sum_{\sigma \in \{\pm 1\}^N}\exp(\beta H_J(\sigma))$ is the normalizing factor called the  \textit{partition function}. Denote by $(\sigma^l)_{l\geq 1}$ i.i.d. samples (replicas) from $G_{J,\beta}^\infty$ and the expectation with respect to these randomness by $\la \cdot\ra_{J,\beta}.$
Note that, when $\beta=\infty$, $G_{J,\beta}$ is a uniform measure supported on the set of \textit{ground states}, which are spin configurations achieving the maximum value of $H_J$ given the disorder $J$. In this case, we shall understand $(\sigma^l)_{l\geq 1}$ as i.i.d. copies that are sampled uniformly at random from the set of ground states. 
We often omit the notation for $J$ and $\beta$ to enhance brevity.
Given two spin configurations $\sigma^1, \sigma^2 \in \{\pm 1\}^N$, we define their overlap to be \[R(\sigma^1, \sigma^2)= \frac{1}{N} \sum_{1\le i\le N}\sigma^1_i \sigma^2_i.\] 
Throughout the article, we consider either one of the following perturbations of the disorder $\rho(J)$.
\begin{definition}[Two types of perturbations]
 \label{def: perturbation}
    \indent
    \begin{enumerate}[label=(\roman*)]
        \item For $t\ge 0$, $J(t)= e^{-t}J+\sqrt{1-e^{-2t}}J'$, where $J'$ is an indepdent copy of $J$.
    
        \item For $t\ge 0$, $J(t)= BJ+(1-B)J'$ where $B\sim \mathrm{Ber}(e^{-t})$, $J'\sim J$, and all the random variables are independent.

        \end{enumerate}
        We call the perturbation in (i) as a \textit{continuous perturbation} and the one in (ii) as a \textit{discrete perturbation}.
\end{definition}
In both types of perturbation, larger $t\ge 0$ corresponds to more perturbation.
The terminology is obvious because, in the first type, we are continuously deforming the disorder, which contrasts with the discrete replacement of the disorder with an i.i.d. copy in the second type.
The continuous perturbation is natural when considering Gaussian disorders in view of the Ornstein-Uhlenbeck (OU) process, see \eqref{eq: semigroup: continuous}. 
The discrete one generally applies to any disorders, and it is the one often considered in the noise-sensitivity literature (see, e.g., \cite{garban2015noise}). We also comment that we may naturally associate the discrete perturbation with a Markov process called the independent flip process, a term coined by Chatterjee, see \cite[Chapter 7]{Chatterjeebook}.


As mentioned in the introduction, our focus lies in understanding the sensitivity of our models subject to the perturbation under Definition \ref{def: perturbation}. 
To make this notion precise, we consider the overlap $R(\sigma,\tau)$ where  $(\sigma,\tau)$ is sampled from the product measure $G_J \otimes G_{J(t)}$.
We always sample $(\sigma^l,\tau^l)_{l\ge 1}$ from the product measure $(G_J\otimes G_{J(t)})^{\otimes \infty}$.
The Gibbs average in this context will be denoted by $\la \cdot \ra_t$ to emphasize that $\tau$ is drawn from the perturbed Gibbs measure $G_{J(t)}$.
Note that these are only defined for the positive temperature case ($\beta<\infty$), and we understand the zero temperature case ($\beta=\infty$) as $\sigma$ and $\tau$ being independently sampled uniformly at random from the set of ground states of the Hamiltonian $H_J$ and $H_{J(t)}$, respectively. 

Our first main result focuses on the mixed even $p$-spin short-range model, i.e., all $p$'s are restricted to even values in \eqref{mixed edge set}.

\begin{theorem}\label{thm: mixed p spin disorder chaos} Consider the mixed even $p$-spin short-range model with the Hamiltonian \eqref{hamiltonian}. Let $t \ge 0$ and  $\beta \in [0, \infty]$. Then, for both types of perturbations and for any $r \in \mathbb Z_+$, we have 
\begin{equation}\label{eq:general_bd}
\e \la R(\sigma,\tau)^2\ra_t \le \frac{1}{N}\max_{i\in [N]}|B_{r}(i)| + e^{-tr}. 
\end{equation}
In particular, we can specialize into the following cases.
\begin{enumerate}
  \item Under Assumption \ref{edge set assumption: mixed p spin}(i), for any $t >0$ and  $\beta \in [0, \infty]$,
\begin{equation*}
    \e \la R(\sigma,\tau)^2\ra_t \le \frac{1}{N} + \frac{C}{N t^\theta}
\end{equation*}
for some constant $C>0$ depending on $A$ and $\theta$.

\item Under Assumption \ref{edge set assumption: mixed p spin}(ii), for any $t >0$ and $\beta \in [0, \infty]$,
\begin{equation} \label{eq: result for condition (ii)}
     \e \la R(\sigma,\tau)^2\ra_t  \le \frac{C}{N^{t/(t+\log \gamma)}} 
\end{equation}for some constant $C>0$ depending  on $A$ and $\gamma$.
\end{enumerate}
\end{theorem}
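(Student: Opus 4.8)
The plan is to establish the master bound \eqref{eq:general_bd} first, since the two specialized cases follow by elementary optimization over the free parameter $r$: under polynomial growth we choose $r \asymp \lceil \theta/t \rceil$ (or $r$ minimizing $A r^\theta/N + e^{-tr}$), and under exponential growth we choose $r$ so that $\gamma^r/N \approx e^{-tr}$, i.e. $r \approx \frac{\log N}{t + \log\gamma}$, which yields the stated $N^{-t/(t+\log\gamma)}$ decay up to a constant depending on $A$ and $\gamma$; these are routine and I would relegate them to one short paragraph.

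For the master inequality, I would use the Fourier--Hermite spectral decomposition that the paper has set up. Write $\langle \sigma_i \tau_i \rangle_t$ as a function of the disorder vector $(J_e)_{e \in E}$ (equivalently, after the continuous perturbation, of the pair $(J,J')$), expand it in the orthonormal Hermite basis, and observe that $\e \langle R(\sigma,\tau)^2 \rangle_t = \frac{1}{N^2}\sum_{i,j} \e\big[\langle \sigma_i\tau_i\rangle_t \langle \sigma_j\tau_j\rangle_t\big]$ can be bounded, via Cauchy--Schwarz or direct Parseval, by $\frac{1}{N}\max_i \sum_{\alpha} \widehat{f_i}(\alpha)^2$ where $f_i(J) = \langle \sigma_i \rangle_J$ and $\alpha$ ranges over multi-indices on $E$. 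The perturbation contributes a factor $e^{-t|\alpha|}$ to each Hermite mode of degree $|\alpha|$ (this is exactly the action of the OU semigroup, and for the discrete perturbation the analogous contraction factor), so the sum splits as $\sum_{|\alpha| \le r} (\cdots) e^{-t|\alpha|} + \sum_{|\alpha| > r}(\cdots) e^{-t|\alpha|}$. The high-degree tail is bounded by $e^{-tr}$ using $\sum_\alpha \widehat{f_i}(\alpha)^2 = \e[f_i^2] \le 1$. The crux is the low-degree part: I would invoke Lemma~\ref{lem: coefficient zero} together with Propositions~\ref{prop: lowfrequencydies} and \ref{prop: hypertree lemma} to argue that a Hermite mode $\alpha$ with $\widehat{f_i}(\alpha) \neq 0$ must be "supported" (as a sub-hypergraph of $G$) on a connected configuration reaching within distance $r$ of $i$ when $|\alpha| \le r$ — i.e. the support of $\alpha$ lies in $B_r(i)$ — and the evenness of all $p$'s is what makes the parity obstruction in Lemma~\ref{lem: coefficient zero} force this locality. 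Combined with Parseval this gives $\sum_{|\alpha|\le r}\widehat{f_i}(\alpha)^2 \le $ (number of modes supported in $B_r(i)$ weighted by their squared coefficients) $\le \e[f_i^2] \le 1$, but more importantly the cross term $\e[\langle\sigma_i\tau_i\rangle\langle\sigma_j\tau_j\rangle]$ vanishes unless $B_r(i)$ and $B_r(j)$ overlap, so only $O(|B_r(i)|)$ choices of $j$ contribute for each $i$, giving the $\frac{1}{N}\max_i|B_r(i)|$ term.

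The main obstacle I anticipate is making the locality step fully rigorous: one must show precisely that any contributing Hermite coefficient $\widehat{\langle\sigma_i\rangle_J}(\alpha)$ of degree at most $r$ corresponds to a multi-index $\alpha$ whose edge-support, viewed in the hypergraph $G$, connects to vertex $i$ within $r$ steps — equivalently that disconnected or "too far" supports have vanishing coefficient. This is where the algebraic identity of Lemma~\ref{lem: coefficient zero} does the real work, replacing the sign-flipping symmetry argument of \cite[Lemma~1]{Chatterjee}; for a mixture with even $p$ only, flipping all spins in one connected component of the support leaves the Gibbs measure invariant (since each $\sigma_e$ with $|e|$ even is unchanged) while flipping the corresponding disorders, and matching Hermite coefficients forces the relevant ones to be zero unless the component touches $i$. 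I would carefully track that the "distance $r$" in the geometric condition matches the Hermite degree cutoff $r$, so that the exponent $e^{-tr}$ and the volume $|B_r(i)|$ line up. Once locality is in hand, the bound \eqref{eq:general_bd} is immediate, and the rest is the optimization already described.
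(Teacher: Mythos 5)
Your overall strategy---Hermite expansion, the $e^{-t|n|}$ contraction from Lemma~\ref{lem: semigroup: main}, Parseval, and the geometric locality condition of Proposition~\ref{prop: lowfrequencydies}---is the right one and matches the paper. But the decomposition you build it on is incorrect. The identity $\e\la R(\sigma,\tau)^2\ra_t=\frac{1}{N^2}\sum_{i,j}\e[\la\sigma_i\tau_i\ra_t\la\sigma_j\tau_j\ra_t]$ is false: the left side equals $\frac{1}{N^2}\sum_{i,j}\e[\la\sigma_i\sigma_j\ra_J\la\tau_i\tau_j\ra_{J(t)}]$, whereas your right side is $\e\la R(\sigma,\tau)\ra_t^2$. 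Worse, the function you propose to expand, $f_i(J)=\la\sigma_i\ra_J$, is identically zero in the mixed even $p$-spin model without external field (the Hamiltonian is invariant under the global flip $\sigma\mapsto-\sigma$ because every $\sigma_e$ with $|V(e)|$ even is unchanged), so all of its Hermite coefficients vanish and the argument collapses. The correct objects are the two-point functions $\phi_{ij}(J)=\la\sigma_i\sigma_j\ra$: replica factorization gives $\e\la\sigma_i\sigma_j\tau_i\tau_j\ra_t=\e[\phi_{ij}(J)\phi_{ij}(J(t))]=\sum_{n}e^{-t|n|}\hat\phi_{ij}(n)^2$ (with $e^{-t|E(n)|}$ in the discrete case), Proposition~\ref{prop: lowfrequencydies} forces $|E(n)|\ge d(i,j)$ on every nonzero coefficient, and Parseval then yields $\e\la\sigma_i\sigma_j\tau_i\tau_j\ra_t\le e^{-td(i,j)}\,\e\la\sigma_i\sigma_j\ra^2\le e^{-td(i,j)}$. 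The split producing \eqref{eq:general_bd} is then over pairs $(i,j)$ according to whether $d(i,j)\le r$, not over Hermite degree for a fixed site; your claim that only the $j$ with $B_r(i)\cap B_r(j)\neq\emptyset$ contribute is also unsupported, since nothing localizes the high-degree modes.

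The specialization to case~1 also does not follow ``by elementary optimization over $r$'' in \eqref{eq:general_bd}: to force $e^{-tr}\lesssim N^{-1}t^{-\theta}$ you need $r\gtrsim t^{-1}\log N$, and then $\frac{1}{N}\max_i|B_r(i)|\le \frac{A}{N}(t^{-1}\log N)^{\theta}$, which misses the stated bound by a factor of $(\log N)^{\theta}$. The paper instead takes $r=N+1$ (so the far term vanishes), retains the weighted sum $\frac{1}{N}\max_i\sum_j e^{-td(i,j)}$, and sums over the shells $\{j:kt^{-1}\le d(i,j)<(k+1)t^{-1}\}$, each of cardinality at most $A(k+1)^{\theta}t^{-\theta}$ by the polynomial growth assumption; this yields $\frac{1}{N}+\frac{C}{Nt^{\theta}}$ with no logarithmic loss. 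Your optimization for case~2 is correct. Finally, the spectral lemmas are stated for $\beta<\infty$; the $\beta=\infty$ statement is obtained by letting $\beta\to\infty$ in \eqref{eq:general_bd}, a step worth recording explicitly.
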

\begin{remark}\label{rem:Theorem_1.3}
\begin{enumerate}
    \item It is an easy fact (see, e.g., Lemma~\ref{lem: semigroup: main}) that $t \mapsto \e \la R(\sigma,\tau)^2\ra_t$ is non-increasing. Hence, the overlap bounds given in 
    Theorem~\ref{thm: mixed p spin disorder chaos} are more interesting for small $t$, that is, when the amount to perturbations to disorders is small. In the polynomial growth case, our result says that $ \e \la R(\sigma,\tau)^2\ra_t \to 0$ as soon as $t \gg N^{-1/\theta}$ (independent of $\beta)$. For $\beta = \infty$, this matches with the result obtained in \cite{Chatterjee} for the ground state of the Edwards-Anderson model on finite boxes of $\mathbb{Z}^d$. On the other hand, for the exponential growth case, our result implies that $ \e \la R(\sigma,\tau)^2\ra_t \to 0$ as soon as $t \gg (\log N)^{-1},$ again independent of $\beta$.

    \item  It is possible to establish  crude lower bounds on the overlap when $t$ is small enough. 
    Under the discrete perturbation, Equation \eqref{eq: semigroup: discrete} implies  $\e \la R(\sigma,\tau)^2\ra_t \ge c \e\la  R(\sigma^1,\sigma^2)^2 \ra$ for some constant $c>0$, independent of $\beta\in [0,\infty]$, whenever $t\le |E|^{-1}.$ 
    This holds without any restrictions on the hyperedge set $E$, and, in particular, for dense graphs as well. 
    Under the continuous perturbation, we can show that $t\le |E|^{-5/2-\eps}$ for some $\eps>0$ guarantees a similar lower bound with further regularity assumptions on the functions $(\rho_p)_{2\le p \le \Delta}$, though this bound appears to be non-optimal.
    Specializing into the canonical case of Gaussian disorders, we can find an alternative bound  $t\le   12^{-2}\beta^{-1}|E|^{-3/2} (\e \la R(\sigma^1,\sigma^2)^2\ra)^2$ which has a better exponent in $|E|$ but depends on $\beta$, implies a similar lower bound on the perturbed overlap.
    We defer the precise statements and proofs of these inequalities to the appendix.

\end{enumerate}



\end{remark}

\subsection{Diluted Mixed $p$-spin Model}\label{sec: diluted mixed p}


 Fix $\Delta \ge 2$. For each $2\leq p\leq \Delta$, each hyperedge from ${[N]\choose p}$ is selected independently with probability
\begin{equation*}
  \frac{\alpha_p N}{ {N \choose p} }
    \end{equation*}
for some $\alpha_p>0$, and let $E_p$ be the collection of chosen hyperedges. Consider the random hypergraph  $([N], E)$ for $E:=\bigcup_{2\le p \le \Delta } E_p$. 
    The Hamiltonian is the same as before, namely \eqref{hamiltonian}, where the disorder is independent of the randomness of the edge set.
While we considered deterministic graphs in the mixed even $p$-spin short-range model described in Theorem \ref{thm: mixed p spin disorder chaos}, the randomness of the graphs in the diluted mixed $p$-spin model aids in removing the restriction on the edges being even, as will be apparent in Proposition \ref{prop: hypertree lemma} and Lemma \ref{lem: nbd hypertree} below.

Let us fix the notation  \[\lambda \colonequals  \sum_{2\le p \le \Delta} p(p-1) \alpha_p.\]

\begin{theorem}\label{thm: diluted mixed p spin}


Consider the diluted mixed $p$-spin model. Let $t\ge 0$ and $\beta \in [0,\infty]$. Assume $\lambda > 1$. For both types of perturbations, there exists a constant $C>0$ depending only on $(\alpha_p)_{2\le p \le \Delta}$ such that \[\e \la R(\sigma,\tau)^2\ra_t \le \frac{C}{N^{t/(t+ 2\log \lambda )}}.\]
\end{theorem}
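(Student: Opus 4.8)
The plan is to mirror the structure of the short-range result (Theorem~\ref{thm: mixed p spin disorder chaos}) but exploiting the extra randomness in the dilution to handle hyperedges of odd size. The starting point is the general bound \eqref{eq:general_bd}-type estimate: for any fixed radius $r \in \mathbb{Z}_+$ one should have, now with the graph $G=([N],E)$ random,
\[
\e \la R(\sigma,\tau)^2\ra_t \le \frac{1}{N}\,\E\bigl|B_r(i)\bigr| + e^{-tr},
\]
where the first term uses that the expected ball size is the same for every vertex $i$ by exchangeability. The two ingredients behind such a bound are (a) the Hermite/Fourier decomposition of the two-point correlation function together with Lemma~\ref{lem: coefficient zero} and the geometric consequence in Proposition~\ref{prop: hypertree lemma}, which forces the only surviving contributions to $\e\la \sigma_i\tau_i\ra_t$ to come from Fourier-Hermite modes supported on a connected sub-hypergraph containing $i$ and hence of diameter contributing a factor $e^{-tr}$ once the support has ``depth'' at least $r$; and (b) a crude bound on the remaining modes by the indicator that $B_r(i)$ is large. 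Both (a) and (b) are essentially inherited from the machinery already set up for Theorem~\ref{thm: mixed p spin disorder chaos}; the novelty here is just that Proposition~\ref{prop: hypertree lemma} is stated for random hypergraphs and it is the randomness of $E$ that lets the argument run for odd $p$ as well.

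Next I would estimate $\E|B_r(i)|$ for the diluted model. Each vertex has, in expectation, roughly $\sum_{p} p\,\alpha_p N \cdot \binom{N-1}{p-1}/\binom{N}{p} = \sum_p p\,\alpha_p$ hyperedges through it in the ``rooted'' sense, and following a hyperedge of size $p$ to a new vertex costs a branching factor $(p-1)$; summing over $p$ gives an effective offspring mean of $\lambda = \sum_p p(p-1)\alpha_p$. A standard first-moment / Galton–Watson domination argument then yields
\[
\E|B_r(i)| \le C_0 \sum_{k=0}^{r} \lambda^{k} \le C_1 \lambda^{r},
\]
valid for $\lambda>1$ (this is exactly where the hypothesis $\lambda>1$ enters, ensuring the geometric series is dominated by its last term; one could also just say $\E|B_r(i)|\le C_1\lambda^r$ when $\lambda>1$). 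Plugging into the master bound,
\[
\e \la R(\sigma,\tau)^2\ra_t \le \frac{C_1 \lambda^{r}}{N} + e^{-tr}.
\]

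Now optimize over $r$. Choosing $r$ so that the two terms balance, i.e. $\lambda^r/N \asymp e^{-tr}$, gives $r = \frac{\log N}{t + \log \lambda}$ up to rounding, and substituting back produces a bound of order $N^{-t/(t+\log\lambda)}$. The statement has $2\log\lambda$ rather than $\log\lambda$ in the denominator, which simply reflects being generous in the constant/rounding: taking $r = \lfloor \frac{\log N}{t + 2\log\lambda}\rfloor$ (so that there is slack to absorb the constant $C_1$ and the floor) makes both $\frac{C_1\lambda^r}{N}$ and $e^{-tr}$ bounded by $C\,N^{-t/(t+2\log\lambda)}$, which is the claimed inequality; the constant $C$ depends only on $(\alpha_p)_{2\le p\le\Delta}$ through $C_1$ and $\lambda$. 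For the degenerate case $t=0$ the right-hand side is $C$, which holds trivially since $R\le 1$, and $\beta=\infty$ is handled exactly as in the short-range theorem by interpreting $\la\cdot\ra_t$ via uniform samples from the respective ground-state sets.

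The main obstacle is item (a) above in the random-graph setting: one must make sure that the Fourier-Hermite/algebraic-equation argument (Lemma~\ref{lem: coefficient zero} and Proposition~\ref{prop: hypertree lemma}) is carried out \emph{conditionally on the edge set $E$}, so that the geometric localization of the contributing coefficients holds for the realized hypergraph, and only \emph{afterwards} does one take expectation over $E$ to replace $|B_r(i)|$ by $\E|B_r(i)|$. The fact that odd-size hyperedges are now permitted means the sign-flip symmetry used in \cite[Lemma 1]{Chatterjee} is unavailable, so one genuinely relies on the algebraic identity for the Hermite coefficients together with the observation (to be supplied by Proposition~\ref{prop: hypertree lemma} / Lemma~\ref{lem: nbd hypertree}) that in the diluted model a ``locally tree-like'' structure holds with high enough probability for the counting to go through; quantifying that and checking it does not deteriorate the $\E|B_r(i)|\le C_1\lambda^r$ bound is the delicate part. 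Everything after that is the routine optimization sketched above.
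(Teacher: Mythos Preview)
Your claimed master bound
\[
\e \la R(\sigma,\tau)^2\ra_t \le \frac{1}{N}\,\E|B_r(i)| + e^{-tr}
\]
is precisely what does \emph{not} follow once odd hyperedges are permitted, and the factor $2$ in the exponent is not slack from rounding but is forced by a third term you have omitted. In the even-$p$ case Proposition~\ref{prop: lowfrequencydies} gives $\hat\phi_{ij}(n)=0$ whenever $|E(n)|<d(i,j)$, unconditionally on the graph, and that is what produces the clean two-term bound. With odd $p$ present, Proposition~\ref{prop: hypertree lemma} only delivers $\hat\phi_{ij}(n)=0$ for $|E(n)|<\min(r,d(i,j))$ \emph{under the hypothesis that $B_r(i)$ is a hypertree}; on the complementary event one has nothing better than the trivial bound $\e_J\la\sigma_i\sigma_j\tau_i\tau_j\ra_t\le 1$. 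Conditioning on $E$ and then averaging therefore gives (this is Lemma~\ref{lem: upperbound: hypertree})
\[
\e\la\sigma_i\sigma_j\tau_i\tau_j\ra_t \le \p\bigl(B_r(i)\text{ contains a Berge cycle}\bigr) + \e\,e^{-t\min(d(i,j),r)},
\]
and after summing over $i,j$,
\[
\e\la R(\sigma,\tau)^2\ra_t \;\lesssim\; \p\bigl(B_r(1)\text{ has a cycle}\bigr) + \frac{1}{N}\,\E|B_r(1)| + e^{-tr}.
\]
The cycle probability is the binding term, and it \emph{does} deteriorate the bound: in the exploration process a cycle appears at depth $s$ when two already-explored vertices lie in a common new hyperedge (or two new hyperedges share a fresh vertex), events of conditional probability of order $|I_{s-1}|^2/N$, so $\p(\text{cycle in }B_r)\lesssim N^{-1}\sum_{s\le r}\E|I_s|^2\lesssim \lambda^{2r}/N$ by Lemmas~\ref{lem: expectation of infected} and~\ref{lem: nbd hypertree}. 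Balancing $\lambda^{2r}/N$ against $e^{-tr}$ forces $r\approx\log N/(t+2\log\lambda)$ and yields exactly the exponent $t/(t+2\log\lambda)$; Remark~\ref{rmk1}(4) flags this loss explicitly. Note that if your two-term bound were actually valid, the optimal choice of $r$ would give the strictly better exponent $t/(t+\log\lambda)$; recovering only $t/(t+2\log\lambda)$ by a deliberately suboptimal $r$ is not ``being generous'' but a symptom that the inequality you are optimizing is missing a term.
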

\begin{remark}\label{rmk1}
\begin{enumerate}

    \item Site disorder chaos for a version of the diluted model was studied earlier in Chen-Panchenko \cite{CP_2018}, where the result was limited only to the pure even $p$ spin model with very large $\alpha_p$ so that it is close to the mean-field model. Our result here holds without these restrictions.

    \item    The parameter $\lambda$ is the average vertex degree of the underlying random hypergraph. It is known  \cite{schmidt1985component} that the critical threshold is $\lambda=1$. The case $\lambda > 1$ corresponds to the supercritical regime when there is a giant (linear-sized) component with high probability. On the other hand, if $\lambda < 1$, the hypergraph is subcritical, that is,  all connected components have at most logarithmic size with high probability, which renders the diluted model less interesting in this regime.


    \item For the diluted pure $2$-spin model with Gaussian disorder, for any $\beta>0$, as $N\to\infty$, the unperturbed overlap $R(\sigma^1,\sigma^2)$ has a trivial distribution at zero in the annealed region $\alpha_2 \le 1/2$ (see \cite{Guerra2004}). On the other hand, if $\alpha_2 > 1/2$, it is expected that there exists some critical temperature such that below the criticality, the limiting overlap has a non-trivial distribution. In sharp contrast to this nontriviality, the result in Theorem \ref{thm: diluted mixed p spin} indicates that the overlap is trivialized at any temperature as long as the perturbation is in force. 

    \item We can view $\lambda$ as the bound on the (average) growth rate for the underlying random hypergraph of the diluted model (see Lemma~\ref{lem: expectation of infected}). However, let us emphasize that  Theorem~\ref{thm: diluted mixed p spin} does not assume that  hyperedges contain an even number of vertices. We circumvent the issue of the odd hyperedges by exploiting the fact that random hypergraphs are locally tree-like. However, in the implementation of the argument, we lose a multiplicative factor of $2$ in front of $\log \lambda$ compared to the bound \eqref{eq: result for condition (ii)}  obtained in the deterministic case with no odd hyperedges.

\end{enumerate}
  
\end{remark}

\subsection{L\'evy Model}\label{sec: levy model}
Let $1<\alpha<2.$ 
Let $ X$ be a symmetric random variable with tail probability
\[\p(| X|>x)= \frac{L(x)}{x^\alpha},\ \forall x>0,
\]
     where $L:(0,\infty)\to (0,\infty)$ is a slowly varying function at $\infty$.
It is well-known that for any $p<\alpha$, \begin{equation}\label{eq: integrability of heavy tails}
    \e |X|^p <\infty.
\end{equation}
 Potter's bound says that for any $\eta>0$, there exists $y_0>0$ such that for all $y\ge y_0$ and $s\ge 1$, \begin{align}
    (1-\eta)s^{-\eta}<\frac{L(sy)}{L(y)} < (1+\eta)s^{\eta}.\label{potter}
\end{align} In particular, \eqref{potter} dictates the behavior of a slowly varying function as $s\to\infty$.
Suppose that  $\rho:\mathbb{R}\to\mathbb{R}$ is a non-decreasing function such that
	$\rho(J)$ has the same distribution as $X$ for $J\sim N(0,1).$ 
Let $(J_{ij})_{1\leq i,j\leq N}$ be i.i.d. copies of $J.$
 Define the Hamiltonian of the L\'evy model as
	\[	H_N(\sigma)=\frac{1}{a_N}\sum_{i,j=1}^N\rho (J_{ij})\sigma_i\sigma_j, \quad \sigma \in \{\pm 1\}^N,	\]
    where $a_N=\inf\{x>0: \p(|X|> x)\le 1/N\}.$ 
    It is a standard result, e.g., \cite{BCC11}, in the study of heavy-tailed random variables that \begin{align}
    a_N=N^{1/\alpha}\ell (N)\label{eq: a_N}
\end{align} for some slowly varying function $\ell$ at $\infty$.

    The Gibbs measure for the L\'evy model is defined in the same way as \eqref{Gibbs measure} with the above Hamiltonian.

\begin{theorem}\label{chaosLevy}
Consider the L\'evy model with $1<\alpha<2$ and the continuous perturbation. 
 For any $0\leq \beta <\infty$ and $0<\eps< 2/\alpha -1$, there exist two constants $c=c(\beta)>0$ and $K=K(\beta,\eps)>0$ such that for any $t \ge 0$ and $N\geq 1,$ 
		\begin{equation*}
			\e\la R(\sigma,\tau)^2\ra_t \leq \frac{K}{N^{(\frac{2}{\alpha}-1-\eps)\min(1,ct)}}.
		\end{equation*}
	\end{theorem}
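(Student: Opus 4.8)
The strategy is to isolate the role of the heavy-tailed normalization $a_N=N^{1/\alpha}\ell(N)$ — which is responsible for the gain $N^{1-2/\alpha}$ — and otherwise to run the Gaussian-interpolation machinery developed for the Sherrington--Kirkpatrick model, for which the continuous perturbation is tailor-made.

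\textbf{Step 1 (reduction to one value of $t$ via the OU semigroup).} Write $f_{ij}(J):=\la\sigma_i\sigma_j\ra_{G_{J,\beta}}$; since $|R|\le1$ this is bounded, hence in $L^2$ of the standard Gaussian law $\gamma$ on the i.i.d.\ family $J=(J_{kl})$, with Fourier--Hermite expansion $f_{ij}=\sum_k\widehat{f_{ij}}(k)H_k$. The continuous perturbation is exactly the Ornstein--Uhlenbeck flow, so $\e_{J'}\big[\la\tau_i\tau_j\ra_t\big]=(P_tf_{ij})(J)$ and therefore
\[
\varphi(t):=\e\la R(\sigma,\tau)^2\ra_t=\frac1{N^2}\sum_{i,j}\la f_{ij},P_tf_{ij}\ra_{L^2(\gamma)}=\frac1{N^2}\sum_{i,j}\sum_k e^{-t|k|}\widehat{f_{ij}}(k)^2 .
\]
Thus $\varphi$ is a nonnegative superposition of the functions $t\mapsto e^{-t|k|}$; in particular, by Cauchy--Schwarz $\varphi$ is log-convex on $[0,\infty)$, and $\varphi(0)\le1$. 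Consequently it suffices to prove, for some $t_0=t_0(\beta)>0$, the single bound $\varphi(t_0)\le KN^{-(2/\alpha-1-\eps)}$: then for $0\le t\le t_0$, log-convexity with $\varphi(0)\le1$ gives $\varphi(t)\le\varphi(t_0)^{t/t_0}\le\big(KN^{-(2/\alpha-1-\eps)}\big)^{t/t_0}$, while for $t\ge t_0$ monotonicity of $\varphi$ gives $\varphi(t)\le\varphi(t_0)$; together these yield the theorem with $c=c(\beta)=1/t_0$ (after enlarging $K$).

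\textbf{Step 2 (the bound at $t_0$: the SK treatment with heavy tails).} To control $\varphi(t_0)$ we adapt the Gaussian-interpolation argument for disorder chaos in the SK model (\cite{Chatterjeebook,chen2019order}): coupling the $\sigma$- and $\tau$-systems and differentiating the relevant interpolated free energy, one bounds $\e\la R(\sigma,\tau)^2\ra_{t_0}$ through Gaussian integration by parts in the $J_{kl}$'s. Each integration by parts extracts from the Hamiltonian a factor $\rho'(J_{kl})$ (and, along the perturbation, a partner $\rho'(J_{kl}(t_0))$) multiplying a bounded Gibbs correlation of $\pm1$-valued spins; summing the $N^2$ resulting terms and dividing by the normalization $a_N^2$ produces the factor $\dfrac{N^2}{N\,a_N^2}=\dfrac{N}{a_N^2}=N^{\,1-2/\alpha}\ell(N)^{-2}$, the origin of the exponent $2/\alpha-1$. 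Two heavy-tail features must be accommodated. First, $\e[\rho(J)^2]=\infty$, so the classical $L^2$/variance bookkeeping is unavailable: one works with $L^p$ for $p<\alpha$, or equivalently splits $\rho(J_{kl})=\rho(J_{kl})\1_{\{|\rho(J_{kl})|\le\eps a_N\}}+\rho(J_{kl})\1_{\{|\rho(J_{kl})|>\eps a_N\}}$, the bounded part being a weak SK model with effective coupling $\asymp\eps^{(2-\alpha)/2}$ (hence deep in the replica-symmetric regime as $\eps\downarrow0$) and the complementary part a sparse graph with $O(N)$ edges. Second, since $\rho$ is the monotone (quantile) coupling of $J\sim N(0,1)$ to $X$, it inherits the tail of $X$: $\e|\rho'(J)|^p<\infty$ exactly for $p<\alpha$, and the mixed moment $\e[\rho'(J)\rho'(J(t))]$ is finite only once $t$ exceeds a positive threshold $t_*(\alpha)$ (because $\rho'$ grows roughly like $e^{J^2/(2\alpha)}$, which the joint Gaussian density of $(J,J(t))$ dominates only for $t$ large enough). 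This is precisely why $t_0$ must be bounded away from $0$ — forcing the $\min(1,ct)$ shape — and why $c$ is allowed to depend on $\beta$. Finally, Potter's bound \eqref{potter} absorbs all slowly varying prefactors $\ell(N)^{\pm1}$ into an arbitrarily small power $N^{\eps}$, which produces the stated range $0<\eps<2/\alpha-1$.

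\textbf{Main obstacle.} The crux is Step 2: pushing the SK disorder-chaos machinery through with a disorder of infinite variance. Concretely, one must carry the factors $\rho'(J_{kl})$, $\rho'(J_{kl}(t_0))$ through every estimate while keeping the remaining Gibbs quantities uniformly bounded; verify that the interpolation (differentiability of the free energy, the convexity and boundary inputs of the SK argument) survives the heavy tails; and calibrate the truncation level so that the bounded (weak-SK) part is genuinely in its high-temperature phase and the sparse part is negligible, all with constants uniform in $N$. The integrability threshold $t_*(\alpha)$ is the structural reason why one only obtains the $\min(1,ct)$-type estimate — rather than the sharper path-counting estimates available for the short-range and diluted models in Theorems~\ref{thm: mixed p spin disorder chaos} and \ref{thm: diluted mixed p spin} — and it is exactly this that makes the L\'evy case genuinely distinct from the diluted one.
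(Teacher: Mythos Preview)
Your Step~1 is correct and is exactly the paper's Lemma~\ref{add:lem1}: log-convexity of $t\mapsto\varphi(t)$ together with $\varphi(0)\le1$ reduces the whole theorem to a single estimate $\varphi(t_0)\le K\,N/a_N^2$ at one fixed $t_0>0$, after which Potter's bound absorbs $\ell(N)$ into $N^{\eps}$.

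Step~2, however, is not yet a proof, and one of the strategies you sketch would not work. Three concrete devices are missing.

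\emph{(a) What is being differentiated.} You cannot bound $\e\la R^2\ra_{t_0}$ by differentiating it directly in $t$: after Gaussian integration by parts the resulting terms contain $\e\bigl[\rho'(J_{ij}^1(t))^2\la\cdots\ra\bigr]$, and the paper explicitly remarks that this may be infinite. The paper instead introduces the coupled free energy with an overlap penalty,
\[
F_N(t,\lambda)=\frac1N\,\e\ln\sum_{\sigma,\tau}\exp\bigl(\beta H^1_{N,t}(\sigma)+\beta H^2_{N,t}(\tau)+\lambda N R(\sigma,\tau)^2\bigr),
\]
and uses convexity in $\lambda$ (the Chen--Lam argument \cite{chen2024universality}) to get $\lambda_0\,\e\la R^2\ra_{t_0}\le F_N(t_0,\lambda_0)-F_N(t_0,0)$.

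\emph{(b) The comparison point.} The difference is then estimated not against $t=0$ but against $t=\infty$, where the two systems are independent and $NR(\sigma,\tau)$ is a sum of i.i.d.\ Rademacher variables, so $F_N(\infty,\lambda_0)-F_N(\infty,0)=O(N^{-1})$. The whole burden becomes $F_N(t_0,\lambda_0)-F_N(\infty,\lambda_0)$, and it is the derivative $\partial_t F_N(t,\lambda)$ that is computed by integration by parts.

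\emph{(c) Hypercontractivity, not truncation.} After integration by parts one indeed faces $\e\,\rho'(J^1(t))\rho'(J^2(t))$ as you anticipate, but the paper does \emph{not} control $\rho'$ directly, nor does it split $\rho$ into a bounded-plus-sparse decomposition. Instead it undoes the integration by parts, $\e\,\rho'(J^1(t))\rho'(J^2(t))=(1-e^{-t})^{-1}\e\,J^1J^2\rho(J^1(t))\rho(J^2(t))$, applies H\"older, and then uses \emph{Gaussian hypercontractivity},
\[
\e\,|\rho(J^1(t))|^{b}|\rho(J^2(t))|^{b}\le\bigl(\e\,|\rho(J)|^{\,b(1+e^{-t})}\bigr)^{2/(1+e^{-t})},
\]
which is finite precisely when $b(1+e^{-t})<\alpha$; this is what pins the threshold $t_0>-\ln(\alpha-1)$ and delivers the factor $N/a_N^2$. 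Your ``weak SK $+$ sparse'' alternative is not pursued in the paper and, as stated, is not equivalent to working in $L^p$; making it rigorous would require controlling the interaction between the two pieces inside the Gibbs measure, which is a separate and nontrivial problem.
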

\begin{remark}
   As in the third item of Remark \ref{rmk1}, the nearly zero behavior of the overlap in Theorem \ref{chaosLevy} is a sharp transition as opposed to the unperturbed overlap $R(\sigma^1,\sigma^2)$ in the L\'evy model, for which it is expected to have a nontrivial distribution as long as $\beta$ is large enough, see 
   \cite{Cizeau1993,Cizeau1994}.
\end{remark}







\subsection{Related Results and Applications}

In addition to the short-range and graphical models, there has been vast mathematical progress on chaos in mean-field models in recent years. First of all, chaos in disorder in the SK model as well as its variant, the mixed even $p$-spin model, was pioneered by Chatterjee  \cite{Chatterjee2009}  (among many other results) in the absence of external field. Later on,  disorder chaos was extended by Chen \cite{Chen_2013,Chen16} in the presence of the external field and was pushed forward to the zero temperature setting by Chen-Handschy-Lerman \cite{CHL17}.
Some efforts (e.g. \cite{Chen16,CP_2012}) were made to go beyond the mixed even $p$-spin model and allow odd interactions, which culminated in Eldan's \cite{Eld20} proof in the general mixed $p$-spin model.
On the other hand, for the same class of models, if it was assumed that the mixture of the $p$-spin Hamiltonians is {\it generic}, a form of chaos in temperature was achieved by Chen-Panchenko \cite{CP_2012} and Chen \cite{Chen_2014} and the canonical case was settled by Panchenko \cite{Panchenko_2016}. Interestingly, in the spherical setting, while chaos in disorder or external field remains valid, see, Chen et al. \cite{CHHS_2015}, Chen-Sen \cite{CS17}, and Panchenko-Talagrand \cite{PT07}, the validity of chaos in temperature depends on the mixture of the $p$-spin Hamiltonians; when the model consisted of a pure $p$-spin Hamiltonian, Subag \cite{subag2017geometry} showed that chaos in temperature is absent, whereas if one couples the pure $p$-spin model with a perturbative $q$-spin Hamiltonian for $q\neq p$, Chen-Panchenko \cite{chenPan2017temperature} and Ben Arous-Subag-Zeitouni \cite{arous2020geometry} showed that temperature chaos indeed holds. Notably, the results mentioned here all assumed that the disorders are Gaussian.  The works, Auffinger-Chen \cite{AC_2016} and Chen-Lam \cite{chen2024universality}, addressed the universality of chaos for general settings of the disorders under matching Gaussian moment assumptions.

While the aforementioned works (and the current paper) measure chaos in disorder by the chaotic rearrangement of the overlap, it is also possible to formalize chaos in disorder in terms of other distances between the perturbed and unperturbed Gibbs measures. For example, El Alaoui-Montanari-Sellke \cite{AMS22_Sampling} proposed using the (normalized) 2-Wasserstein distance as a stronger notion than the overlap-based approach and showed that there is disorder chaos for the SK model in the low-temperature regime with the Wasserstein distance, which poses a barrier to stable sampling algorithms. In a subsequent work \cite{alaoui2024shattering}, the same authors demonstrated the shattering phenomenon in pure $p$-spherical spin glasses for large enough $p$ and high temperatures, which again implies disorder chaos in the Wasserstein distance. On the other hand, shattering and the overlap gap property in the Ising pure $p$-spin case in the high-temperature regime were investigated by Gamarnik-Jagannath-Kızıldağ \cite{gamarnik2023shattering}, again for large enough $p$ by making a connection to the random energy model. 

Finally, we close this subsection by mentioning that the overlap gap and disorder chaos properties have been shown to be the main features that result in computational barriers for stable algorithms to achieve near-ground states in a variety of random optimization problems, including the approximate message passing, low-degree polynomial, and Lipschitz algorithms (see \cite{GJ21overlap, GKE23algorithmic, BS22tight, BS24optimization} and references therein).



\subsection{Open Problems}
\begin{enumerate}
   
    \item For any deterministic sequence of 3-uniform hypergraphs with either a polynomial or exponential growing number of vertices, does site disorder chaos hold in the corresponding short-range model?

    \item 
    While the definition of discrete perturbation is non-ambiguous for general disorder distribution, there is more than one way to introduce continuous perturbations to disorders. For example,  
    let us consider the case when the disorders have a density  $e^{-V(x)}$ for some nice (say, continuously differentiable and convex) function $V$. Let $X$ be drawn from this distribution and let  $X(t)$ be the solution to the stochastic differential equation,
    \[ X(t) =  - V'(X(t)) dt + \sqrt{2}{dB(t)}, \ \ X(0) = X, \]
    where $(B(t))_{t\geq 0}$ is the standard Brownian motion independent of $X$.
    If $p(t, \cdot)$ is the density of $X(t)$, then $p$ satisfies the associated Fokker-Planck equation
    \[ \frac{\partial p(t, x)}{\partial t} =\frac{\partial}{\partial x} \Bigl( V'(x) p(t, x)+ \frac{\partial p(t, x)}{\partial x} \Bigr), \ \ p(0, x) = e^{-V(x)}. \]
  Note that $X(0)= X, X(t) \stackrel{d}{=} X$ for all $t \in [0, \infty)$, and $X(\infty)$ is independent of $X$. Therefore,  we can take $X(t)$ as another candidate for continuous perturbation of $X$. In the case $V(x)=x^2/2+2^{-1}\ln (2\pi)$, $(X(t))_{t \ge 0}$ is known as the Ornstein-Uhlenbeck process and it can be solved as $X(t)=e^{-t}X+e^{-t}\int_0^t e^udB(u).$ Consequently, $(X,X(t))$ corresponds to our continuous perturbation $(J,J(t)).$ It would be interesting to see if chaos in disorder can be studied beyond this case by Fourier analysis involving orthonormal polynomials in  $L^2(e^{-V(x)}dx)$. 

    \item For stable index $\alpha>1$, Theorem \ref{chaosLevy} establishes disorder chaos in the L\'evy model under the continuous perturbation. Prove disorder chaos for the discrete perturbation in the same regime of $\alpha$. Also, the case $\alpha \in (0, 1]$ remains completely open. 
    
\end{enumerate}

\section{Preliminaries}
    \subsection{Hypergraphs}\label{sec: prelim hypergraphs}
    For $\ell \in \mathbb N$, a \textit{Berge path of length $\ell$} between $v_1$ and $v_{\ell+1}$ is a sequence $(v_1,e_1,v_2,e_2,\dots,v_\ell, e_\ell, v_{\ell+1})$ of distinct $\ell+1$ vertices and distinct $\ell$ hyperedges such that $\{v_k,v_{k+1}\} \subseteq V(e_k)$ for $k=1,\dots,\ell$.
We define the \textit{distance} $d: [N] \times [N] \to \mathbb R$ between two vertices as the minimum length  among all possible Berge paths connecting those vertices, provided they are distinct; the distance is defined as $0$ if the vertices are identical. 
We assign $\infty$ if there are no paths connecting two distinct vertices.
A (closed) \textit{ball of radius $r \in \mathbb Z_+$ centered at $v\in [N]$} is defined in the usual way as $B_r(v)= \{v'\in[N]: d(v,v')\le r \}$.
Two vertices are considered \textit{connected} if there is a Berge path between them, and this establishes a partition of $[N]$ into connected components.
A \textit{Berge cycle of length $\ell$} is a sequence $(v_1,e_1,v_2,e_2,\dots,v_\ell, e_\ell, v_{\ell+1})$ where $v_1,\dots, v_\ell$ are distinct vertices, $e_1,\dots,e_\ell$ are distinct edges, and $\{v_k,v_{k+1}\} \subseteq V(e_k)$ for $k=1,\dots,\ell$ with $v_{\ell+1}=v_1$. 
A \textit{hypertree} is a connected hypergraph without Berge cycles.
For brevity, a path and a cycle will always mean a Berge path and a Berge cycle, respectively.

For a subset $B\subseteq [N]$, we define its interior and the edge boundary as
\[ \mathring B =\{e\in E : V(e)\subseteq B\}\text{ and }\partial B=\{e\in E: 0<|V(e)\cap B|<|V(e)| \}.\]  
Clearly, the edge set has the partition $E=\mathring B\cup \partial B \cup \{e\in E: |V(e)\cap B|=0\}$.

Denote $\mathbb Z_+\colonequals \mathbb N \cup \{0\}$.
For $n=(n_e)_{e\in E} \in \Z_+^{E}$, we define \begin{equation*}
    E(n)=\{e\in E: n_e>0\} \text{ and } V(n)= \{v \in [N]: v\in V(e) \text{ for some } e\in E(n)\}.
\end{equation*} Let $G(n)= (V(n),E(n))$ be the associated sub-hypergraph of $G$. 
Denote the connected component of $v\in V(n)$ in $G(n)$ as $C(v,n)$ for any $n\in \mathbb Z_+^E$.

\subsection{Hermite Polynomial Expansion}\label{sec: hermite polynomial expansion}
Let $(h_m)_{m\ge 0}$ be the orthonormal basis of normalized Hermite polynomials for $L^2(\mu)$ where $\mu$ is the standard Gaussian measure on $\mathbb R$ and $h_0 \equiv 1$. Then, products of the form $h_n(x)\colonequals \prod_{e\in E}h_{n_e} (x_e) $ for $n\in \mathbb Z_+^E$ and $x\in\mathbb R^E$ is a basis for $L^2(\mu^{\otimes E})$. For any $\phi \in L^2(\mu^{\otimes E})$,  we have the Hermite polynomial expansion \begin{equation}\label{eq: chaos expansion}
\phi(x)=\sum_{n\in \mathbb Z_+^E}\hat \phi(n) h_n(x), \quad x\in \mathbb R^E,\end{equation} where \[\hat \phi(n)\colonequals \e (\phi(J)h_n(J)).\]    
We use the notation $|n|\colonequals \sum_{e\in E}n_e$.

\begin{lemma}\label{lem: semigroup: main}
 Let $t\ge 0$. For any $\phi \in L^2(\mu^{\otimes E})$, we have \begin{align}
    \e \bigl( \phi(J) \phi(J(t)) \bigr) &=  \sum_{n\in \mathbb Z_+^E}e^{-|n|t} \hat \phi (n)^2, \label{eq: semigroup: continuous}\\
    \e \bigl( \phi(J) \phi(J(t)) \bigr) &=  \sum_{n\in \mathbb Z_+^E} e^{-|E(n)|t}  \hat \phi (n)^2  \label{eq: semigroup: discrete}
 \end{align}
     for the continuous and discrete perturbations, respectively.

\end{lemma}
\begin{proof}
Our proof relies on the following two equations:
for the continuous perturbation, 
for any $n\in \mathbb Z_+^E,$ we have \begin{equation}\label{eq: Semigroup action}
    \e (h_n(J(t))|J)= \sum_{n\in \mathbb Z_+^E}e^{-|n|t} h_n(J),
\end{equation}
whereas for the discrete perturbation, 
\begin{equation}\label{eq: Semigroup action: discrete}
     \e (h_n(J(t))|J)= \sum_{n\in \mathbb Z_+^{E}}e^{- |E(n)| t} h_{n}(J).
\end{equation}
    Equation \eqref{eq: Semigroup action} is a standard result of the OU process (see, e.g., \cite{Chatterjeebook}).
    Equation \eqref{eq: Semigroup action: discrete} can be obtained by taking expectation in the Bernoulli randomness.

    Now, let $t\ge 0$ and consider the continuous perturbation. 
    From \eqref{eq: chaos expansion} and \eqref{eq: Semigroup action}, it holds that \begin{equation*}
        \e ( \phi(J(t))|J) = \sum_{n \in \mathbb Z_+^E}\hat \phi(n) e^{-|n|t}h_n(J).
    \end{equation*}
    Since $\e (h_n(J) h_m(J)) =\1_{n=m}$ for $n,m\in \mathbb Z_+^E$, we have
  \[
        \e ( \phi(J) \phi(J(t)) ) = \e ( \phi(J) \e( \phi(J(t))|J) )= \sum_{n\in \mathbb Z_+^E} \hat \phi (n)^2e^{-|n|t}.   
    \]
    The case of discrete perturbation can similarly be treated using \eqref{eq: Semigroup action: discrete}.
    
\end{proof}


\subsection{Geometric Conditions for Vanishing Fourier-Hermite Coefficients}\label{sec: deterministic lemma}

 This subsection consists of the main ingredient in the present work, in which we establish the promised elementary algebraic equation for the Fourier-Hermite series coefficients, see Lemma \ref{lem: coefficient zero} below, and deduce
a sufficient condition for determining the zero coefficients in a systematic way, see Propositions \ref{prop: lowfrequencydies} and \ref{prop: hypertree lemma} below. In particular,
Lemma \ref{lem: coefficient zero} and Proposition \ref{prop: lowfrequencydies} are in lieu of Lemma 1 and its geometric consequence Corollary 5 in \cite{Chatterjee}, respectively.
While \cite{Chatterjee}'s trick involves flipping a subset of disorders and spins, as we will see below, flipping only spins suffices for our purposes.
Nevertheless, we also present Proposition \ref{prop: conditional expectation} as a generalization of the trick in Lemma~1 of \cite{Chatterjee}.

Recall the notation $\eqref{eq: product notation}.$
For any $a, b\in \{\pm 1\}^N$, we define the tensors ${\bf a} =(a_e)_{e\in E}$, ${\bf b}=(b_e)_{e\in E}$, and define their Hadamard product ${\bf a}\circ {\bf b}= (a_e b_e)_{e\in E}$. 
We denote the Gibbs average of the disorder $(a_e J_e)_{e\in E}\equalscolon {\bf a}\circ J$ by $\la \cdot \ra_{{\bf a}\circ J}$.


\begin{lemma}\label{lem: coefficient zero}
Consider the generalized short-range model with Hamiltonian in \eqref{hamiltonian} with $\beta \in [0, \infty)$.
For any $a\in \{\pm 1\}^N$, $i,j\in[N]$, and $n\in \mathbb Z_+^E$, it holds that
\begin{equation*}
	\e \bigl( h_n(J)\la \sigma_i \sigma_j \ra \bigr)=I_n(a) \e \bigl( h_n(J)\la \sigma_i \sigma_j\ra \bigr),
\end{equation*}
where 
\[
I_n(a):=a_i a_j\prod_{e\in E(n)}a_e^{n_e} .
\]
\end{lemma}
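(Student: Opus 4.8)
The plan is to exploit the symmetry of the Ising spin configuration space under the sign flip $\sigma \mapsto \sigma \circ a$, i.e. $\sigma_k \mapsto a_k \sigma_k$ for a fixed $a \in \{\pm 1\}^N$. The key observation is that this change of variables is a bijection on $\{\pm 1\}^N$, so summing over $\sigma$ is unaffected, but it transforms the monomials: $\sigma_e = \prod_{k \in V(e)} \sigma_k \mapsto a_e \sigma_e$ where $a_e = \prod_{k \in V(e)} a_k$. Hence $H_J(\sigma) = \sum_{e} \rho(J_e) \sigma_e \mapsto \sum_e \rho(J_e) a_e \sigma_e = \sum_e \rho(a_e J_e) \sigma_e$, using that $\rho$ is odd so $a_e \rho(J_e) = \rho(a_e J_e)$. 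In other words, the Gibbs measure $G_{J,\beta}$ pushed forward under $\sigma \mapsto \sigma \circ a$ is exactly $G_{{\bf a} \circ J, \beta}$.

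First I would record the consequence for the two-spin correlation: applying the substitution inside the Gibbs average,
\[
\la \sigma_i \sigma_j \ra_{J} = \la a_i \sigma_i \cdot a_j \sigma_j \ra_{{\bf a}\circ J} = a_i a_j \la \sigma_i \sigma_j \ra_{{\bf a}\circ J},
\]
since $a_i a_j$ is deterministic and $a_i^2 = a_j^2 = 1$ is not needed — we simply pull the constants out. (At $\beta = \infty$ the same identity holds because the set of ground states of $H_{{\bf a}\circ J}$ is the image under $\sigma \mapsto \sigma \circ a$ of the set of ground states of $H_J$.) Next, multiply by $h_n(J)$ and take expectation over $J$. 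On the left we get $\e h_n(J) \la \sigma_i \sigma_j\ra$. On the right we get $a_i a_j \, \e\, h_n(J) \la \sigma_i \sigma_j\ra_{{\bf a}\circ J}$. Now I would perform the deterministic change of variables $J_e \mapsto a_e J_e$ in the (Gaussian) expectation: since each $J_e \sim N(0,1)$ is symmetric, $(a_e J_e)_{e\in E} \stackrel{d}{=} (J_e)_{e\in E}$, so replacing $J$ by ${\bf a}\circ J$ inside the expectation is free, giving
\[
a_i a_j \, \e\, h_n({\bf a}\circ J)\, \la \sigma_i \sigma_j \ra_{J}.
\]
Finally I would use the parity of Hermite polynomials: $h_m(-x) = (-1)^m h_m(x)$, so $h_{n_e}(a_e J_e) = a_e^{n_e} h_{n_e}(J_e)$ (valid whether $a_e = 1$ or $-1$), hence $h_n({\bf a}\circ J) = \big(\prod_{e\in E} a_e^{n_e}\big) h_n(J) = \big(\prod_{e \in E(n)} a_e^{n_e}\big) h_n(J)$, the last equality because $a_e^0 = 1$ for $e \notin E(n)$. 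Assembling the pieces yields $\e h_n(J) \la \sigma_i \sigma_j\ra = I_n(a)\, \e h_n(J)\la \sigma_i \sigma_j\ra$ with $I_n(a) = a_i a_j \prod_{e\in E(n)} a_e^{n_e}$, as claimed.

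The steps are all elementary; the only points requiring care are (a) checking that the two change-of-variables operations — one on the spin configuration, one on the Gaussian disorder — are genuinely measure-preserving and can be applied independently, and (b) making sure the argument is valid uniformly in $\beta$, including the $\beta = \infty$ ground-state case where "Gibbs average" means uniform average over maximizers. I expect the main (very mild) obstacle is simply being careful with the bookkeeping of where the factors $a_i, a_j$ and $a_e^{n_e}$ come from: the $a_i a_j$ arises from flipping the \emph{spins} at $i$ and $j$, while the $\prod_{e} a_e^{n_e}$ arises from flipping the \emph{Gaussians} and using Hermite parity — these are two logically separate sources that happen to combine multiplicatively. No deeper difficulty is anticipated.
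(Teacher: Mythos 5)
Your proposal is correct and follows essentially the same route as the paper's proof: the spin-flip identity $H_J(a\circ\sigma)=H_{{\bf a}\circ J}(\sigma)$ via oddness of $\rho$, the distributional invariance ${\bf a}\circ J\stackrel{d}{=}J$, and Hermite parity $h_{n_e}(a_eJ_e)=a_e^{n_e}h_{n_e}(J_e)$ are exactly the three ingredients used there. The only cosmetic difference is that the paper writes $h_n(J)=h_n({\bf a}\circ{\bf a}\circ J)$ before invoking the distributional equality, whereas you perform the change of variables directly; your aside on $\beta=\infty$ is harmless but unnecessary since the lemma is stated for finite $\beta$.
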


\begin{remark}
 It is immediate from the lemma that $\e (h_n(J)\la \sigma_i \sigma_j \ra) =0$ if $I_n(a)=-1$ for some choice of $a\in \{\pm 1\}^N$. However, the following example demonstrates that the converse is not true, even for a connected graph ($p=2$).
    
    Let $G = (V, E) = ([4],\{\{1,2\}, \{1,3\}, \{2,4\}\}) $. Let $\phi = \la \sigma_1 \sigma_2 \ra$. Then one can check that $\phi$ does not depend on $J_{\{1,3\}}, J_{\{2,4\}}$. Indeed,
\begin{align*}
    \phi &= \frac{\sum_{\sigma} \sigma_1 \sigma_2 (1+ \tanh(\beta J_{\{1,2\}}) \sigma_1 \sigma_2 ) (1+ \tanh(\beta J_{\{1,3\}}) \sigma_1 \sigma_3 ) (1+ \tanh(\beta J_{\{2,4\}}) \sigma_2 \sigma_4 ) }{
    \sum_{\sigma} (1+ \tanh(\beta J_{\{1,2\}}) \sigma_1 \sigma_2 ) (1+ \tanh(\beta J_{\{1,3\}}) \sigma_1 \sigma_3 ) (1+ \tanh(\beta J_{\{2,4\}}) \sigma_2 \sigma_4 ) } \\
    &= \frac{\sum_{\sigma_1, \sigma_2} \sigma_1 \sigma_2 (1+ \tanh(\beta J_{\{1,2\}}) \sigma_1 \sigma_2 )  }{\sum_{\sigma_1, \sigma_2} (1+ \tanh(\beta J_{\{1,2\}}) \sigma_1 \sigma_2 )  } = \tanh(\beta J_{\{1,2\}}).
\end{align*}
Now let $n \in \mathbb{Z}_+^E$ with $n_{\{1,2\}}=1,$  $n_{\{1,3\}}=2,$ and $n_{\{2,4\}}=0$.
    It follows that 
    \[ \hat \phi(n) = \e [ \phi h_{1}(J_{\{1,2\}})h_{2}(J_{\{1,3\}})] = \e [ \phi h_{1}(J_{\{1,2\}})] \e[ h_{2}(J_{\{1,3\}})] = 0, \]
    since $\e[ h_{2}(J_{\{1,3\}})] = 0$. On the other hand, 
    \[ I_n(a)= a_1a_2 (a_1 a_2)(a_1 a_3)^2 =1 \ \text{ for all }  a\in \{\pm 1\}^4. \]

\end{remark}

\begin{proof}[\bf Proof of Lemma~\ref{lem: coefficient zero}]
	Note that for any deterministic $a\in \{-1,1\}^N,$ since $\rho_p$'s are odd functions, we can write
	\[
		H_J( {\bf a} \circ\sigma)=\sum_{2\le p \le \Delta}\sum_{e\in E_p}\rho_p(J_{e})a_{e}\sigma_{e} =\sum_{2\le p \le \Delta}\sum_{e\in E_p}\rho_p(a_{e} J_{e})\sigma_{e} = H_{{\bf a}\circ J}(\sigma)
        \]
so that $\la \sigma_i \sigma_j\ra_{ J}=a_i a_j\la \sigma_i \sigma_j\ra_{{\bf a}\circ J}.$ 
	From this and the fact that ${\bf a}\circ {\bf a}\circ J=J$,
	\begin{equation*}
	    \e [h_n(J)\la \sigma_i \sigma_j \ra_J]=a_ia_j\e [h_n({\bf a} \circ {\bf a}\circ J) \la \sigma_i\sigma_j\ra_{{\bf a}\circ J})]= a_ia_j\e [h_n({\bf a}\circ J)\la \sigma_i\sigma_j\ra_{J}],
	\end{equation*}
where the second equality used the obvious fact that ${\bf a}\circ J\stackrel{d}{=}J$.
Finally, using $h_n({\bf a}\circ J)=\prod_{e\in E}h_{n_{e}}(a_e J_{e})= \prod_{e\in E} a_e^{n_{e}}h_{n_{e}}(J_{e})=\prod_{e\in E}a_e^{n_e}\cdot h_n(J)=\prod_{e\in E(n)}a_e^{n_e}\cdot h_n(J)$, we have
\[	\e h_n(J)\la \sigma_i \sigma_j \ra_J=a_i a_j \prod_{e\in E(n)}a_e^{n_{e}}\cdot 	\e h_n(J)\la \sigma_i \sigma_j \ra_J.
\]
\end{proof}

Now, for any $i,j\in [N]$, define \[\phi_{ij} (J) = \la \sigma_i \sigma_j \ra,\] which is a bounded measurable function of the disorder $J$, hence in $L^2(\mu^{\otimes E})$.

\begin{proposition}\label{prop: lowfrequencydies}
Consider the mixed even $p$-spin short-range model on $G=([N],E)$. Let $i,j$ be distinct vertices in $[N]$ and $n\in \mathbb Z_+^E$. If $\hat \phi_{ij}(n)\ne 0$, then we have $i,j \in V(n)$ and there exists a path from $i$ to $j$ in $G(n)$. In particular, $|E(n)|\ge d(i,j)$.
\end{proposition}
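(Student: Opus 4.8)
The plan is to prove the contrapositive in the natural way: if the required geometric structure is absent, then $\hat\phi_{ij}(n)$ must vanish, and the tool for this is Lemma~\ref{lem: coefficient zero}, which says $\hat\phi_{ij}(n) = I_n(a)\,\hat\phi_{ij}(n)$ for every sign vector $a\in\{\pm1\}^N$. Thus it suffices to exhibit, under the negation of the conclusion, a single $a$ with $I_n(a) = a_i a_j \prod_{e\in E(n)} a_e^{n_e} = -1$. The key observation that makes this work, and that uses evenness of the $p$'s in an essential way, is the following: consider the connected component $C$ of $G(n)$ containing $i$ (if $i\notin V(n)$, handle that trivially first). Define $a_v = -1$ for $v\in C$ and $a_v = +1$ otherwise. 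Then for every hyperedge $e\in E(n)$ we have $a_e = \prod_{v\in V(e)} a_v$: if $e\subseteq C$ this is $(-1)^{|V(e)|} = (-1)^p = 1$ since $p$ is even, and if $e$ is disjoint from $C$ this is $1$ as well — and crucially there are no edges of $E(n)$ straddling the boundary of $C$, precisely because $C$ is a connected component of $G(n)$. Hence $\prod_{e\in E(n)} a_e^{n_e} = 1$, so $I_n(a) = a_i a_j$.

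\textbf{Main steps, in order.} First, dispose of the degenerate cases: if $i\notin V(n)$, take $a_i = -1$ and $a_v = +1$ for all $v\neq i$; since $i$ lies in no edge of $E(n)$, the product over $E(n)$ of $a_e^{n_e}$ is $+1$ while $a_i a_j = -1$, forcing $\hat\phi_{ij}(n) = 0$. Symmetrically if $j\notin V(n)$. So assume $i,j\in V(n)$. Second, let $C = C(i,n)$ be the connected component of $i$ in $G(n)$ and apply the sign construction above with $a_v = -1$ on $C$. If $j\notin C$, then $a_i a_j = (-1)(+1) = -1$, so again $I_n(a) = -1$ and $\hat\phi_{ij}(n) = 0$; hence $\hat\phi_{ij}(n)\neq 0$ forces $j\in C$, i.e.\ $i$ and $j$ lie in the same connected component of $G(n)$, which by definition of Berge-connectivity means there is a path from $i$ to $j$ in $G(n)$. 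Third, the bound $|E(n)|\ge d(i,j)$ is then immediate: a path from $i$ to $j$ in $G(n)$ uses distinct hyperedges all lying in $E(n)$, and its length is at least $d(i,j)$ by definition of the hypergraph distance, so $|E(n)| = |E(n)| \ge |E(n)\cap(\text{edges of the path})| \ge d(i,j)$ (using that $|E(n)|$ counts, with multiplicity, at least one for each edge in $E(n)$, hence is at least the number of edges on the path).

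\textbf{Where the difficulty lies.} There is no deep obstacle here — the argument is genuinely elementary — but the one point that requires care, and the one place evenness is used, is verifying that the chosen $a$ kills the product $\prod_{e\in E(n)} a_e^{n_e}$. This hinges on two facts: (a) that no hyperedge of $E(n)$ crosses the boundary of a connected component of $G(n)$ (true by the very definition of connected component, so that each $e\in E(n)$ is either inside $C$ or disjoint from $C$), and (b) that $|V(e)|$ is even for every $e$, so that flipping signs on all of $C$ leaves each $a_e$ equal to $+1$. In the final bound step one should also be mildly careful that $|E(n)| = \sum_{e} n_e$ counts with multiplicity, so it dominates $|E(n)\cap P|$ for any edge-subset $P$, in particular the $d(i,j)$-or-more edges of a shortest path realized inside $G(n)$; no equality is claimed, only the inequality. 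I would write the proof in essentially the order above.
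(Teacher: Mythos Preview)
Your proof is correct and follows essentially the same route as the paper's: the identical sign choice $a_v=-1$ on $C(i,n)$ together with Lemma~\ref{lem: coefficient zero}, using evenness of $|V(e)|$ to ensure $a_e=1$ on $E(n)$. One small slip: you write $|E(n)|=\sum_e n_e$, but in fact $|E(n)|$ is the cardinality of the set $\{e:n_e>0\}$ (it is $|n|$ that equals $\sum_e n_e$); the bound $|E(n)|\ge d(i,j)$ holds directly since a Berge path in $G(n)$ uses distinct edges of $E(n)$, with no multiplicity needed.
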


\begin{proof}
Recall the notations in Lemma \ref{lem: coefficient zero}.
We will choose appropriate $a$'s satisfying $I_n(a)=-1$ and invoke Lemma \ref{lem: coefficient zero}.

Suppose $i\notin V(n)$. In this case, let $a_i=-1$ and $a_l=1$ if $l\neq i$. 
In particular, $a_j=1$. 
Moreover, $i\notin V(n)$ implies $a_e=1$ for any $e\in E(n)$, so $I_n(a)=a_i a_j\prod_{e\in E(n)}a_e^{n_e}= -1 $.
In this case, we conclude $\hat \phi_{ij}(n)=0$ by Lemma \ref{lem: coefficient zero}.

Now, suppose $i\in V(n).$
Further assume that $j\notin C(i,n),$ i.e., there is no path from $i$ to $j$ in the sub-hypergraph $G(n)$.
Let $a_l=-1$ if $l\in C(i,n)$ and $a_l=1$ otherwise.
In particular, 
\begin{equation}\label{eq: assign a's}
    a_i=-1, \quad a_j=1.
\end{equation}
We claim that $a_e=1$ whenever $e\in E(n)$.
To that end, fix an edge $e\in E(n)$.
There are two cases to consider.
If $e\in \mathring C(i,n)$, since every edge in $E$ has even number of vertices, we have $a_e=(-1)^{p}=1$  for some even $p\in 2\Z_+$.
If $e\notin \mathring C(i,n)$, we must have $V(e)\cap C(i,n)=\emptyset$ since otherwise, $e\in E(n)$ implies $e\in \mathring C(i,n)$, a contradiction.
Thus,  $a_e=1$ trivially holds, and this finishes the proof of the above claim.
Combining \eqref{eq: assign a's} and the previous claim,  we have $I_n(a)=a_i a_j \prod_{e\in E(n)} a_e^{n_e}=-1$, and $\hat \phi_{ij}(n)=0$ by Lemma \ref{lem: coefficient zero}.

Changing the roles of $i$ and $j$, we conclude that $\hat \phi_{ij}(n)\ne 0$ implies $i,j\in V(n)$ and they are in the same component of $G(n)$. 
\end{proof}


If we impose that the hypergraph is locally a hypertree, we may allow hyperedges with odd number of vertices. The following Proposition will be used in Section \ref{sec: diluted mixed p} for the proof of the disorder chaos of the diluted mixed $p$-spin model, 


\begin{proposition}\label{prop: hypertree lemma}
   Consider the generalized short-range model on $G= ([N], E)$. Let $i, j$ be distinct vertices in $[N]$ and $n\in \mathbb Z_+^E$.
   Assume that the ball of depth $r \ge 1$ around $i$ in $G$ is a hypertree.  If $\hat \phi_{ij}(n)\neq 0$, then $i,j \in V(n)$ and $|E(n)| \ge \min(r, d(i, j)).$ 
\end{proposition}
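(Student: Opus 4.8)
The plan is to mimic the argument of Proposition~\ref{prop: lowfrequencydies}, with the key modification that the spin-flip set is truncated at the hypertree radius so that we do not need evenness of all hyperedges — only that edges strictly inside the hypertree ball have a controlled parity behavior, which on a hypertree we can always arrange. First I would dispose of the easy direction exactly as before: if $i \notin V(n)$, take $a_i = -1$ and $a_l = 1$ otherwise; since $i$ lies in no edge of $E(n)$, every $a_e = 1$ for $e \in E(n)$, so $I_n(a) = a_i a_j = -1$ and Lemma~\ref{lem: coefficient zero} forces $\hat\phi_{ij}(n) = 0$. By symmetry the same handles $j \notin V(n)$. So assume $i, j \in V(n)$, and suppose for contradiction that $|E(n)| < \min(r, d(i,j))$.

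Next I would analyze the component $C \colonequals C(i, n)$ of $i$ in the sub-hypergraph $G(n)$. Because $|E(n)| < r$, every vertex of $C$ lies within hypergraph distance $< r$ of $i$ in $G(n)$, hence also in $G$, so $C$ is contained in the ball of depth $r$ around $i$, which is a hypertree by hypothesis; in particular $C$, viewed with its edges $E(n) \cap \mathring{C} \cup \partial$-structure, is itself a (Berge) forest — no Berge cycles. The crucial point is that $j \notin C$: indeed $|E(n)| < d(i,j)$ means $E(n)$ cannot contain a Berge path from $i$ to $j$ (such a path has length $\ge d(i,j)$ and uses $d(i,j)$ distinct edges), so $i$ and $j$ are in different components of $G(n)$. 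Now I want to choose $a \in \{\pm1\}^N$ so that $I_n(a) = -1$. The natural choice is $a_l = -1$ for $l \in C$ and $a_l = +1$ otherwise, giving $a_i a_j = -1$; it remains to show $\prod_{e \in E(n)} a_e^{n_e} = +1$, equivalently that every $e \in E(n)$ has an even number of vertices in $C$. Edges of $E(n)$ meeting $C$ lie entirely in $C$ (since $C$ is a connected component of $G(n)$), so the claim is that each such edge has $|V(e)|$ even — which is exactly what could fail.

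Here is where the hypertree hypothesis does the work, and this is the step I expect to be the main obstacle and the one requiring the most care. Rather than flipping all of $C$ at once, I would instead \emph{not} demand $\prod_{e} a_e^{n_e} = +1$ with the full component, but argue more cleverly: enumerate the edges of $E(n)$ incident to $i$'s component as a Berge forest rooted appropriately, and peel off a leaf edge. Concretely, since the sub-hypergraph on $C$ is a Berge forest and $j \notin C$, one can find a subset $S \subseteq C$ (an ``up-set'' determined by a leaf of the forest structure, or by a suitable edge-cut) with $i \in S$, $j \notin S$, and such that $S$ is a union of vertex sets of whole edges of $E(n)$ \emph{except} along a single ``cut edge'' — but on a hypertree we can actually choose the cut so that the boundary edge also gets an even (in fact zero) count of straddled vertices by routing $S$ to respect that edge fully on one side. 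The cleanest implementation: induct on $|E(n)|$ restricted to $C$; pick a leaf edge $e_0$ of the forest (one meeting the rest of $C$ in exactly one vertex $v_0$, which exists since there are no Berge cycles), flip all vertices of $V(e_0) \setminus \{v_0\}$; this makes $a_{e_0} = +1$ (two sign changes among its endpoints cancel in pairs — wait, $|V(e_0)|$ may be odd) $\dots$ — the honest fix is to flip $V(e_0)\setminus\{v_0\}$ \emph{and} track the residual sign on $v_0$, propagating inward. I would set this up as: build $a$ edge-by-edge along the forest so that after processing, $a_e = +1$ for every processed $e \in E(n)$ and the only vertex whose final sign is ``undetermined'' is pushed toward $i$; since the forest is finite and acyclic this terminates with $a_i = -1$ (we choose the last free sign), all $a_e = +1$ for $e \in E(n)$, and $a_j = +1$ (as $j$ was never touched, lying outside $C$). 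Then $I_n(a) = a_i a_j \prod_{e \in E(n)} a_e^{n_e} = -1$, and Lemma~\ref{lem: coefficient zero} yields $\hat\phi_{ij}(n) = 0$, the desired contradiction. Swapping the roles of $i$ and $j$ is not symmetric here (the hypertree hypothesis is only around $i$), but it is not needed, since we have already shown $i, j \in V(n)$ directly and the bound $|E(n)| \ge \min(r, d(i,j))$ from the component argument; I would double-check that the final inequality only invokes the forest structure near $i$, which it does.
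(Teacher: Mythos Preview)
Your overall strategy --- show $i,j\in V(n)$ directly, then on the hypertree component $C=C(i,n)$ construct $a$ supported on $C$ with $a_i=-1$ and $a_e=+1$ for every $e\in E(n)$ --- is sound in principle and does lead to a correct proof, but it is genuinely different from (and more laborious than) the paper's argument, and your description of the key construction has a gap.

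The gap is in the propagation. Your bottom-up sweep ``flip $V(e_0)\setminus\{v_0\}$ and push the residual sign toward $i$'' runs into trouble at any vertex $v_0$ that is the attachment point of several leaf edges: each child edge imposes its own residual sign on $v_0$, and these need not agree once you have committed to flipping all non-parent vertices of each leaf edge. Consequently ``we choose the last free sign'' as $a_i=-1$ is not justified --- in the sweep you describe, $a_i$ is forced, not free. The fix is either to run the sweep \emph{top-down} (set $a_i=-1$; for each edge $e$ in order of increasing depth, set its children so that $\prod_{v\in V(e)}a_v=+1$, which is always possible since $e$ has at least one child and each child is being assigned for the first time), or to argue abstractly over $\mathbb F_2$ that the system $\{\sum_{v\in V(e)}x_v=0\}_{e}$ on a hypertree never forces $x_i=0$ (otherwise some nonempty $S\subseteq E(C)$ would have $i$ as its unique odd-degree vertex, contradicting the fact that every nonempty hyperforest has at least two hyperleaves). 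Either patch is short, but neither is what you wrote.

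The paper sidesteps this construction entirely via an observation you missed: since $a_e^{n_e}=1$ whenever $n_e$ is even, only $E_{\odd}(n)=\{e:n_e\text{ odd}\}$ matters for $I_n(a)$. Work in $G_{\odd}(n)$ instead of $G(n)$; the component $C_{\odd}(i,n)$ is still a hypertree contained in $B_r(i)$ and still excludes $j$. Pick any hyperleaf $v\neq i$ of $C_{\odd}(i,n)$ and flip \emph{only} $a_v$. Exactly one edge of $E_{\odd}(n)$ contains $v$, so exactly one factor $a_e^{n_e}$ becomes $-1$; every other factor is $+1$, and $a_i=a_j=+1$. Hence $I_n(a)=-1$ and Lemma~\ref{lem: coefficient zero} finishes. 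This is a one-line argument once $E_{\odd}(n)$ is isolated, whereas your route requires solving a system over the whole component.
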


\begin{proof}
We note that $I_n(a)$ for $\la \sigma_i \sigma_j \ra$ does not depend on edges with even $n_e$. Keeping that in mind, for $n\in \mathbb Z_+^E$, we let $E_{\odd}(n)\colonequals \{e\in E(n): \textrm{$n_e$ is odd}\}$, 
$V_{\odd}(n)\colonequals \bigcup_{e\in E_{\odd}(n)}V(e) $ be the associated vertices, and $G_{\odd}(n)=(V_{\odd}(n), E_{\odd}(n))$ be the associated sub-hypergraph.
Note that $V_{\odd}(n)\subseteq V(n)$.

If $i\notin V_{\odd}(n)$, then we have $\hat \phi_{ij}(n)=0$ by choosing $a_i=-1$ and $a_l=1$ for $l\ne i$. The same conclusion holds if $j\notin V_{\odd}(n)$.

Suppose $i,j\in V_{\odd}(n)$ and let $|E(n)|< \min(r, d(i,j))$. 
Then, the connected component $C_{\odd}(i,n)$ of $i$ in the sub-hypergraph $G_{\odd}(n)$ is a hypertree and $j\notin C_{\odd}(i,n)$.
Choose a \textit{hyperleaf} $v\in C_{\odd}(i,n)$, i.e., there exists exactly one edge in $E_{\odd}(n)$ containing $v$.  
Since all hyperedges contain at least two vertices, we may assume that $ v\neq i$.
Define $a_v=-1$ and $a_l=1$ if $l \ne v$. 
Then, $a_i a_j \prod_{e\in E(n)}a_e^{n_e}=-1$, and Lemma \ref{lem: coefficient zero} yields $\hat \phi_{ij}(n)=0$.

\end{proof}

\begin{remark}
Proposition~\ref{prop: lowfrequencydies} does not always hold if we allow hyperedges with odd number of vertices. For a counterexample, consider the hypergraph $G = (V, E)$ with vertex set $V = \{0, 1', 2', 3', 1'', 2'', 3''\}$ and hyperedge set $E = E' \cup E'' \cup \{\{0, 3', 3''\} \}$ for
\begin{align*}
    E' = \{ \{ 1', 2', 3' \}, \{ 2', 3' \} \} \,\,\mbox{and}\,\,E'' = \{ \{ 1'', 2'', 3'' \}, \{ 2'', 3'' \} \}.
\end{align*}
See Figure~\ref{fig:counterexample}. Let $\phi = \la \sigma_{1'} \sigma_{1''} \ra.$
\begin{figure}[h] 
    \centering
        \includegraphics[width=0.40\textwidth]{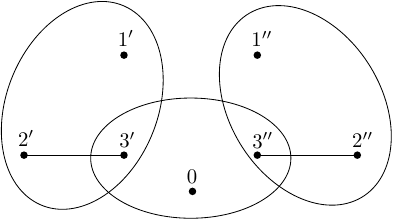} 
 \caption{Hypergraph $G$.} \label{fig:counterexample}
\end{figure}    
Below we write $\sigma = (\sigma_0, \sigma', \sigma'') \in \{-1, 1\}^V $ with 
$\sigma' \in  \{-1, 1\}^{\{1', 2', 3' \}}$ and $\sigma'' \in  \{-1, 1\}^{\{1'', 2'', 3'' \}}$.
Note that $\phi$ is the ratio of the following two quantities:
\begin{align*}
 \sum_{\sigma} \sigma_{1'} \sigma_{1''}  \prod_{ e \in E }   ( 1+ \tanh(\beta J_{e}) \sigma_e) &= \sum_{\sigma', \sigma''} \sigma_{1'} \sigma_{1''}  \prod_{ e \in E' \cup E'' }   ( 1+ \tanh(\beta J_{e}) \sigma_e), \\
\sum_{\sigma} \prod_{ e \in E }   ( 1+ \tanh(\beta J_{e}) \sigma_e) &= \sum_{\sigma', \sigma''}\prod_{ e \in E' \cup E'' }   ( 1+ \tanh(\beta J_{e}) \sigma_e),
  \end{align*}
where the two equalities hold by summing over the single spin $\sigma_0.$ Therefore,  $\phi$ decouples as $\phi = \la \sigma_{1'} \ra_{E'}\la \sigma_{1''} \ra_{E''}$ with
\begin{align}\label{add:eq0}
    \la \sigma_{1'} \ra_{E'} &= \frac{\sum_{\sigma'} \sigma_{1'} ( 1+ \tanh(\beta J_{\{1', 2', 3'\}}) \sigma_{1'} \sigma_{2'} \sigma_{3'}) ( 1+ \tanh(\beta J_{\{2', 3'\}}) \sigma_{2'} \sigma_{3'} )  }
    {\sum_{\sigma'}  ( 1+ \tanh(\beta J_{\{1', 2', 3'\}}) \sigma_{1'} \sigma_{2'} \sigma_{3'}) ( 1+ \tanh(\beta J_{\{2', 3'\}}) \sigma_{2'} \sigma_{3'} ) }
\end{align}
and $ \la \sigma_{1''} \ra_{E''}$ is defined similarly. Distributing the product over the sum, we deduce that 
\begin{align*}
    \la \sigma_{1'} \ra_{E'} = \tanh(\beta J_{\{1', 2', 3'\}})\tanh(\beta J_{\{2', 3'\}}), \quad
    \la \sigma_{1''} \ra_{E''} = \tanh(\beta J_{\{1'', 2'', 3''\}})\tanh(\beta J_{\{2'', 3''\}}).
\end{align*}
 If we take $n \in \mathbb{Z}_+^E$ with $n_{\{1', 2', 3'\}} =n_{\{2', 3'\}} = n_{\{1'', 2'', 3''\}} =n_{\{2'', 3''\}} = 1 $ and $n_{\{0, 3', 3''\}} = 0$, then 
 \[ \hat \phi(n) = \e [\la \sigma_{1'} \ra_{E'} \la \sigma_{1''} \ra_{E''} J_{\{1', 2', 3'\}} J_{\{2', 3'\}}
 J_{\{1'', 2'', 3''\}} J_{\{2'', 3''\}} ]  = (\e[ J \tanh(\beta J) ])^4 >0,   \]
 for any $\beta >0$. However, there is no path between $1'$ and $1''$ in $E(n)$, since any such path cannot avoid the hyperedge $\{0, 3', 3''\}$. Let us also point out that this example does not contradict Proposition~\ref{prop: hypertree lemma} since $1$-neighborhood of vertex $1'$ contains a Berge cycle, namely, $(2', \{ 1', 2', 3'\}, 3', \{  2', 3'\}, 2').$

 One can  replace the hyperedge $\{0, 3', 3''\}$ with a path of arbitrary length of $k$ connecting $3$ and $3'$ such as $\{3', x_1, y_1\}, \{y_1, x_2, y_2\},  \{y_2, x_3, y_3\}, \ldots, \{y_{k-1}, x_k, 3''\}$ where $1', 2', 3', 1'', 2'', 3''$, $x_1, \ldots, x_k$, $y_1, \ldots, y_{k-1}$ are all distinct vertices. Summing over 
 spins $\sigma_{x_1}, \ldots, \sigma_{x_k}$ as we did in \eqref{add:eq0}, we can still conclude that $\phi = \la \sigma_{1'} \ra_{E'}\la \sigma_{1''} \ra_{E''}$ and $\hat \phi(n) >0$ for the index $n$  where   $n_e = 0 $ on this new path and $n_e =1$ on the rest of the four hyperedges. This shows that if we allow odd hyperedges, then it is possible that $\hat \phi(n)$ is non-zero, but $d(i, j)$ is arbitrarily large (but finite) compared to $|E(n)|$.
 Also, we can have $d(i,j)=\infty$ and $\hat \phi(n) >0$ by simply deleting the hyperedge $\{0,3',3''\}$ from $E$.

 \end{remark}

We present a proposition that is of independent interest, but is not used elsewhere in the paper.
\begin{proposition}\label{prop: conditional expectation}
Consider the EA model.
Let $G= ([N], E)$ be a graph and $i \ne j \in [N]$. If $S \subseteq E$ does not contain a path in $S$ that connects $i$ and $j$, then 
$$
\e[\la \sigma_i \sigma_j \ra| J_r,r\in S]=0.
$$
\end{proposition}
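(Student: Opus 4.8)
The plan is to reduce the conditional statement to the unconditional vanishing result that is essentially contained in Lemma~\ref{lem: coefficient zero} (in its graph case $p=2$), by expanding $\langle \sigma_i\sigma_j\rangle$ in the Fourier--Hermite basis and observing that conditioning on $\{J_r : r\in S\}$ simply kills every term whose multi-index is supported outside $S$. So first I would write $\phi_{ij}(J)=\langle\sigma_i\sigma_j\rangle = \sum_{n\in\mathbb Z_+^E}\hat\phi_{ij}(n)h_n(J)$ as in \eqref{eq: chaos expansion}, and take the conditional expectation $\e[\phi_{ij}(J)\mid J_r, r\in S]$. Since the $J_e$ are independent and $h_0\equiv 1$ with $\e h_m(J_e)=0$ for $m\ge 1$, only those $n$ with $E(n)\subseteq S$ survive, giving
\[
\e[\langle\sigma_i\sigma_j\rangle\mid J_r,r\in S]=\sum_{n:\,E(n)\subseteq S}\hat\phi_{ij}(n)\,h_n(J).
\]
Hence it suffices to show $\hat\phi_{ij}(n)=0$ for every $n$ with $E(n)\subseteq S$.

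Next I would invoke Lemma~\ref{lem: coefficient zero}: for any $a\in\{\pm1\}^N$, $\hat\phi_{ij}(n)=I_n(a)\hat\phi_{ij}(n)$ with $I_n(a)=a_ia_j\prod_{e\in E(n)}a_e^{n_e}$, so $\hat\phi_{ij}(n)=0$ as soon as we can exhibit one sign vector $a$ with $I_n(a)=-1$. Fix $n$ with $E(n)\subseteq S$. By hypothesis there is no path in $S$ joining $i$ and $j$; a fortiori there is no path in $E(n)\subseteq S$ joining them, so $i$ and $j$ lie in different connected components of the graph $G(n)=(V(n),E(n))$ (or one of them is isolated from $E(n)$ altogether). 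Let $a_\ell=-1$ for $\ell$ in the connected component $C(i,n)$ of $i$ in $G(n)$ and $a_\ell=1$ otherwise. Since $G(n)$ is an \emph{ordinary} graph (all edges have two vertices), every edge $e\in E(n)$ is either inside $C(i,n)$, in which case $a_e=(-1)(-1)=1$, or has no endpoint in $C(i,n)$, in which case $a_e=1$; thus $\prod_{e\in E(n)}a_e^{n_e}=1$. Because $j\notin C(i,n)$ we get $a_ia_j=-1$, hence $I_n(a)=-1$ and $\hat\phi_{ij}(n)=0$. (If $i\notin V(n)$ one can instead just take $a_i=-1$, $a_\ell=1$ for $\ell\ne i$, which gives $I_n(a)=-1$ directly.) This is exactly the $p=2$ specialization of the argument in Proposition~\ref{prop: lowfrequencydies}, so the only place evenness of hyperedges is used is automatically satisfied here.

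Combining the two steps, every term in the displayed conditional expansion has coefficient zero, so $\e[\langle\sigma_i\sigma_j\rangle\mid J_r,r\in S]=0$, completing the proof. There is essentially no analytic obstacle: the one point to be slightly careful about is the interchange of the conditional expectation with the infinite Hermite sum, which is justified because $\phi_{ij}\in L^2(J)$ and the conditional expectation is an $L^2$-contraction acting diagonally on the orthonormal basis $\{h_n\}$ (it projects onto the closed span of $\{h_n : E(n)\subseteq S\}$); one can also simply note that $\langle\sigma_i\sigma_j\rangle$ is bounded and appeal to dominated convergence on partial sums. The genuinely load-bearing input is Lemma~\ref{lem: coefficient zero} together with the combinatorial observation that a spin flip supported on one side of a graph cut leaves every surviving edge variable invariant, which is the sign-flip trick already set up in the paper.
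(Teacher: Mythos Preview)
Your proof is correct and follows essentially the same route as the paper: expand $\langle\sigma_i\sigma_j\rangle$ in the Hermite basis, note that conditioning on $\{J_r:r\in S\}$ projects onto $\{n:E(n)\subseteq S\}$, and then kill each surviving coefficient via the sign-flip argument of Lemma~\ref{lem: coefficient zero}/Proposition~\ref{prop: lowfrequencydies}. The paper's version is terser, simply asserting $\hat\phi(n)=0$ once $E(n)$ contains no $i$--$j$ path (implicitly invoking Proposition~\ref{prop: lowfrequencydies} in the graph case), whereas you spell out the component-based choice of $a$ explicitly; the content is the same.
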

\begin{proof}
    Let $\phi =\la \sigma_i \sigma_j \ra $. Then 
 \[  \e[ \phi | J_r,r\in S] = \sum_{n\in \mathbb{Z}_+^E, n|_{E\setminus S}=0 } \hat \phi(n) h_n(J).  \]
 Now for any $n|_{E\setminus S}=0$, the edge set $E(n) = \{ e \in E: n_e >0\} \subseteq S$ and hence, it does not contain a path joining $i$ and $j$. Therefore, $\hat \phi(n) = 0$. The proof follows.
\end{proof}

\section{Proof of Theorem \ref{thm: mixed p spin disorder chaos}}\label{sec: proof of generalized short-range model}
    

\begin{proof}[{\bf Proof of Theorem \ref{thm: mixed p spin disorder chaos}}]
It suffices to prove the theorem for finite $\beta \ge 0$.  The case $\beta 
 = \infty$ would then immediately follows from sending $\beta \to \infty$. 
From Lemma \ref{lem: semigroup: main}, Proposition \ref{prop: lowfrequencydies}, and Parseval's identity $\sum_{n\in \mathbb Z_+^E}\hat \phi_{ij}(n)^2 = \e \la \sigma_i\sigma_j\ra^2 $, and the fact that $|n|\ge |E(n)|$ for $n\in\mathbb Z_+^E$, we have for both types of perturbations 
\[
    \e \la \sigma_i\sigma_j\tau_i \tau_j\ra_t \le e^{-t d(i,j)}\e \la \sigma_i \sigma_j \ra^2 \]  for any $i,j\in [N]$. Therefore, for any $r\in \mathbb Z_+$,
    \begin{align}
    \e \la R(\sigma,\tau)^2\ra_t  &= \frac{1}{N^2} \sum_{1\le i,j\le N} \e \la \sigma_i\sigma_j\tau_i \tau_j\ra_t \le \frac{1}{N^2}\sum_{1\le i,j\le N } e^{-t d(i,j)} \e \la \sigma_i \sigma_j\ra ^2 \nonumber\\
    &\le \frac{1}{N}\max_{i\in [N]}\sum_{j: d(i,j) \le r}e^{-td(i,j)}\e \la \sigma_i\sigma_j\ra^2  + \frac{1}{N^2}\sum_{(i,j):d(i,j) >  r}e^{-td(i,j)}\e \la \sigma_i\sigma_j\ra^2 \label{eq:intermediate_bd}\\
    &\le \frac{1}{N}\max_{i\in [N]}|B_{ r }(i)| + e^{-t  r  }\e \la R(\sigma^1,\sigma^2)^2\ra \label{eq:intermediate_bd:2},   
    \end{align}
    which proves \eqref{eq:general_bd} since $|R(\sigma^1,\sigma^2)| \le 1$.
    
Under (i) of Assumption \ref{edge set assumption: mixed p spin}, setting $r=N+1$ so that the second term vanishes in \eqref{eq:intermediate_bd}, we have
\begin{align}
    \e \la R(\sigma,\tau)^2\ra_t &\le \frac{1}{N}\max_{i\in[N]}\sum_{1\le j \le N}e^{-td(i,j)}
    \le \frac{1}{N} +  \frac{1}{N}\max_{i\in[N]}\sum_{ j \in [N] \setminus \{i\}}e^{-td(i,j)},  \label{eq: polynomial}
\end{align}
which we bound below by grouping vertices according to the values of $\lfloor td(i,j) \rfloor,$  \begin{align*}
\frac{1}{N} +  \frac{1}{N}\max_{i\in [N]}\sum_{k=0}^\infty \sum_{\substack{j: \lfloor td(i,j) \rfloor=k\\
j \ne i}} e^{-k}   &\le \frac{1}{N} +  \frac{1}{N} \sum_{k=0}^\infty e^{-k}\max_{i\in[N]}|\{j \ne i: kt^{-1} \le d(i,j)<(k+1)t^{-1}\}|
\end{align*}
Note that $|\{j \ne i : kt^{-1} \le d(i,j)<(k+1)t^{-1}\}| \le A(k+1)^{\theta}t^{-\theta},$ which also holds true if $(k+1) < t$, since in this  case the set becomes empty. 

Thus, \eqref{eq: polynomial} is bounded by
\begin{equation*}
      \frac{1}{N}  + \sum_{k = 0 }^\infty  e^{-k} A(k+1)^{\theta} t^{-\theta}
      \le  \frac{1}{N} + \frac{C}{Nt^{\theta}},
\end{equation*}
for some constant $C>0$ depending only on $A$ and $\theta$.

Under (ii) of Assumption \ref{edge set assumption: mixed p spin}, then \eqref{eq:intermediate_bd:2} is bounded above by \begin{align*}
    \e \la R(\sigma,\tau)^2\ra_t &\le \frac{1}{N}\max_{i\in[N]}|B_{ r}(i)| +e^{-tr} 
    \le \frac{1}{N} A\gamma^r+e^{-tr}.\nonumber
\end{align*}
  Setting $r= \lceil (t+ \log \gamma)^{-1}\log  N \rceil$, we obtain the bound 
  \[ \e \la R(\sigma,\tau)^2\ra_t \le \frac{C}{N^{t/(t+\log \gamma)}}  \] for some $C>0$ depending only on $A$ and $\gamma$.

\end{proof} 
 \begin{remark} 
In the exponential growth case, we can retain $\e \la R(\sigma^1,\sigma^2)^2\ra$  in the bound \eqref{eq:intermediate_bd:2} to obtain
\[ \e \la R(\sigma,\tau)^2\ra_t \le \frac{1}{N} A\gamma^r+e^{-tr} \e \la R(\sigma^1,\sigma^2)^2\ra. \]
By taking $r= \lceil (t+ \log \gamma)^{-1}\log ( N  \e \la R(\sigma^1,\sigma^2)^2\ra) \rceil$, we arrive at the bound 
\[ \e \la R(\sigma,\tau)^2\ra_t \le C  (N \e \la R(\sigma^1,\sigma^2)^2\ra) ^{-t/(t+\log \gamma)} \e \la R(\sigma^1,\sigma^2)^2 \ra \]
for some constant $C$ depending on $A$ and $\gamma$. Of course, this bound offers no improvement 
over \eqref{eq: result for condition (ii)} if $ \e \la R(\sigma^1,\sigma^2)^2 \ra \ge c$ for some positive constant $c$. However, in the case when $\e \la R(\sigma^1,\sigma^2)^2\ra \ll 1$, instead of the trivial bound $ \e \la R(\sigma,\tau)^2\ra_t \le \e \la R(\sigma^1,\sigma^2)^2\ra$, we can conclude from  the above bound that
\[ \e \la R(\sigma,\tau)^2 \ra_t  \ll \e \la R(\sigma^1,\sigma^2)^2\ra,\]
if $t \gg 1 / \log (N \e \la R(\sigma^1,\sigma^2)^2\ra).$ 
 \end{remark}

\section{Proof of Theorem \ref{thm: diluted mixed p spin}}\label{sec: proof of diluted mixed p spin model}

We prove Theorem \ref{thm: diluted mixed p spin} using a spectral method similar to the proof of Theorem \ref{thm: mixed p spin disorder chaos}. 
The main difference lies in the utilization of Proposition \ref{prop: hypertree lemma} instead of Proposition \ref{prop: lowfrequencydies}.
Due to the diluted nature of our hypergraph, we demonstrate in Lemma \ref{lem: nbd hypertree} that the hypergraph looks locally like a hypertree up to depth $(2\log \lambda)^{-1}(1-\eps) \log N$ with probability at least $1-N^{-\eps}$ for any $0<\eps<1$.
We achieved this via the use of the exploration process defined in Section \ref{sec: exploration process}, which allows us to control the growth rate and the probability of producing a cycle at each depth of a ball. The use of the exploration process to study the component size and their properties of random hypergraphs is standard, see for example, \cite{cooley2018size, PZ19, schmidt1985component}.  However, we could not locate Lemma~\ref{lem: nbd hypertree} in the literature in its exact form,  and we chose to include its proof for completeness.  Combined with Proposition~\ref{prop: hypertree lemma}, we show that the low-frequency coefficients in the Hermite polynomial expansion are zero with high probability.
The remainder of the proof follows as in Theorem \ref{thm: mixed p spin disorder chaos}.

\subsection{Exploration Process}\label{sec: exploration process}
We first describe the exploration process on the random hypergraph which starts from a single vertex and conducts a breadth-first search on its connected component. Fix a vertex $i\in [N]$.
We define the \textit{exploration process} $(R_t ,I_t ,S_t ,E_t )_{t\in \mathbb Z_+} \in [N]^3 \times E$ inductively as follows. 
Put $R_0 =\emptyset$, $I_0 =\{i\}$, $S_0 =[N]\setminus\{i\}$, and $E_0 =\emptyset$.  
Given $\mathcal F_{t}\colonequals \sigma(R_s ,  I_s , S_s ,E_s )_{0\le s \le t}$, we define 
\begin{align*}
    R_{t+1} &= R_t \cup I_t , \nonumber\\
    E_{t+1} &=\{e\in E: V(e)\cap I_t  \ne \emptyset, \ V(e)\cap R_t =\emptyset\}, \nonumber\\
    I_{t+1} &= \bigcup_{e\in E_{t+1} } V(e)\cap S_t ,\nonumber\\
    S_{t+1} &= S_t \setminus I_{t+1} . 
\end{align*} Heuristically, at step $t$, $R_t $ represents the removed (or recovered) vertices, $I_t $ represents the newly infected (or explored) vertices, $S_t $ represents the susceptible (or unexplored) vertices. $E_t$ represents the set of hyperedges that we `reveal' during the step $t$  of the algorithm. It consists of hyperedges with the property that they contain at least one vertex from $I_{t-1}$ and all of their vertices belong to $I_{t-1} \cup S_{t-1}$.


Below we state some simple properties of the exploration process without a proof.
Here $\dot \cup$ denotes the disjoint union.
\begin{proposition}[Properties of exploration processes]\label{prop: properties of the exploration process}
\indent
    \begin{enumerate}[label=(\roman*)]
        \item $R_t =\dot \bigcup_{s<t}I_s $ for any $t\in \mathbb Z_+.$
        \item  $R_t \dot\cup I_t \dot\cup S_t  = [N]  $ for any $t\in \mathbb Z_+$.
        \item $V(e) \subseteq I_{t} \cup I_{t+1} $ for any $t\in \mathbb Z_+$ and $e\in E_{t+1} $.
        \item $\bigcup_{t\in\mathbb Z_+}R_t = V(C)$  and  $\dot \bigcup_{t\in \mathbb Z_+}E_t  = E(C)$ where $C$ is the connected component of $i$.
        \item $B_{t}  = R_{t+1}  = \dot \bigcup_{s \le t}I_s  $ for any $t\in \mathbb Z_+$, where $B_{t} = B_t(i)$ is the ball of radius $t$ centered at $i$.
    \end{enumerate}
\end{proposition}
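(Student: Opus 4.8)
The statement to be proved is Proposition~\ref{prop: properties of the exploration process}, a list of five structural identities about the breadth-first exploration process $(R_t, I_t, S_t, E_t)_{t \in \mathbb Z_+}$. The plan is to prove (i)--(v) essentially in order, since each later item relies on the earlier ones, and every step is a straightforward induction on $t$ using the defining recursions $R_{t+1} = R_t \cup I_t$, $E_{t+1} = \{e \in E : V(e) \cap I_t \ne \emptyset,\ V(e) \cap R_t = \emptyset\}$, $I_{t+1} = \bigcup_{e \in E_{t+1}} V(e) \cap S_t$, and $S_{t+1} = S_t \setminus I_{t+1}$.

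First I would prove (i), that $R_t = \dot\bigcup_{s < t} I_s$, by induction: the base case $R_0 = \emptyset$ is the empty union, and $R_{t+1} = R_t \cup I_t = (\dot\bigcup_{s<t} I_s) \cup I_t$, where the union is disjoint because, by the defining recursion, $I_{t+1} \subseteq S_t$ and $S_t$ is disjoint from $R_{t+1} \supseteq R_s \cup I_s$ for all $s \le t$ — so one should simultaneously carry the invariant that the $I_s$ are pairwise disjoint and that $R_t, I_t, S_t$ are pairwise disjoint. That simultaneous invariant is exactly (ii): $R_t \,\dot\cup\, I_t \,\dot\cup\, S_t = [N]$; its base case is $\emptyset \,\dot\cup\, \{i\} \,\dot\cup\, ([N]\setminus\{i\}) = [N]$, and the inductive step follows since passing from step $t$ to $t+1$ moves $I_t$ into $R_{t+1}$ and moves $I_{t+1} \subseteq S_t$ out of $S_t$, leaving a partition. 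For (iii), $V(e) \subseteq I_t \cup I_{t+1}$ for $e \in E_{t+1}$: by definition of $E_{t+1}$ we have $V(e) \cap R_t = \emptyset$, so by (ii) $V(e) \subseteq I_t \cup S_t$; moreover $V(e) \cap I_t \ne \emptyset$, and every vertex of $V(e)$ lying in $S_t$ is by definition placed into $I_{t+1}$, hence $V(e) \subseteq I_t \cup I_{t+1}$.

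Next, (iv) identifies the union of all $R_t$ with the vertex set of the connected component $C$ of $i$, and the disjoint union of all $E_t$ with its edge set. For the vertex statement, one inclusion is clear since every vertex added is reached via a revealed hyperedge incident to an already-reached vertex, yielding a Berge path from $i$; for the reverse inclusion one argues that if $v \in V(C)$ with a Berge path of length $\ell$ from $i$, then $v$ is reached by step $\ell$ — a short induction on the path length, using that each hyperedge along the path not yet revealed will be revealed the first time one of its vertices becomes infected. The edge statement is analogous: each $e$ with $V(e) \subseteq V(C)$ is revealed exactly once, at the first step $t+1$ such that $V(e) \cap I_t \ne \emptyset$, and disjointness across $t$ follows from the condition $V(e) \cap R_t = \emptyset$ in the definition of $E_{t+1}$ together with (i). Finally, (v), $B_t(i) = R_{t+1} = \dot\bigcup_{s \le t} I_s$, combines (i) with the observation that $I_s$ is exactly the set of vertices at distance $s$ from $i$; this distance characterization of $I_s$ is really the content of the induction in (iv), so (v) is a corollary once (iv) is in place.

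The only step requiring genuine care — and the one I would expect to be the main obstacle, though still routine — is the distance characterization underlying (iv) and (v): namely that $I_s = \{v : d(i,v) = s\}$ in the Berge-hypergraph metric. The subtlety is that a hyperedge can straddle several ``levels,'' so one must check that revealing a hyperedge $e$ at step $t+1$ (when $V(e) \cap I_t \ne \emptyset$) does not prematurely pull in a vertex that should be at a larger distance, nor fail to pull in one at distance $t+1$; property (iii), $V(e) \subseteq I_t \cup I_{t+1}$, is precisely what prevents the first pathology (a newly revealed hyperedge touches only levels $t$ and $t+1$), and the definition of $I_{t+1}$ as the union over \emph{all} such hyperedges of their susceptible vertices handles the second. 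Once this is nailed down, all five parts follow by the inductions sketched above, and no nontrivial computation is involved.
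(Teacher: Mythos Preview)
Your proposal is correct, and in fact supplies more than the paper does: the paper explicitly declines to prove this proposition, stating only ``Below we state some simple properties of the exploration process without a proof.'' So there is no paper proof to compare against; your inductive argument is exactly the standard verification the authors chose to omit, and it is sound as written (the one point you flag as needing care---that $I_s$ coincides with the sphere $\{v:d(i,v)=s\}$---is indeed the only nontrivial step, and your handling via (iii) and the minimality of the first revealing time is the right way to close it).
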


Recall $\lambda =  \sum_{2\le p \le \Delta} p(p-1) \alpha_p$. Let us also define  $\lambda' =  \sum_{2\le p \le \Delta} p(p-1)(p-2) \alpha_p$.
\begin{lemma}\label{lem: expectation of infected}
For $t\in\mathbb Z_+$, we have \begin{align*}
        \e |I_t | &\le \lambda^t,\\
        \e |I_t |^2 &\le  \sum_{k=t}^{2t}\lambda^{k}+\lambda'  \sum_{k=t-1}^{2(t-1)}\lambda^k.
    \end{align*}
\end{lemma}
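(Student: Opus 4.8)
\textbf{Proof proposal for Lemma~\ref{lem: expectation of infected}.}

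The plan is to derive recursive (in)equalities for $\e|I_{t+1}|$ and $\e|I_{t+1}|^2$ by conditioning on $\mathcal F_t$ and summing over the vertices that could possibly be newly infected. The key structural fact is that, given $\mathcal F_t$, every vertex $v \in S_t$ is infected at step $t+1$ precisely when some hyperedge $e$ with $v \in V(e)$, $V(e)\cap I_t \ne\emptyset$, and $V(e)\subseteq I_t \cup S_t$ is present; and each such potential hyperedge is included independently with the prescribed probability $\alpha_p N / \binom{N}{p}$, independently of $\mathcal F_t$ (these hyperedges are disjoint from the edges revealed so far). So $\Prob(v \in I_{t+1}\mid \mathcal F_t)$ is at most a union bound over the relevant hyperedges through $v$.

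First I would bound $\e|I_{t+1}|$. Conditioned on $\mathcal F_t$, the number of hyperedges $e$ of size $p$ containing a fixed $v\in S_t$ and at least one vertex of $I_t$, with the remaining vertices in $I_t\cup S_t\setminus\{v\}$, is at most $|I_t|\binom{N}{p-2}$ (choose one endpoint in $I_t$, the remaining $p-2$ freely). Hence $\Prob(v\in I_{t+1}\mid\mathcal F_t)\le \sum_p |I_t|\binom{N}{p-2}\cdot \alpha_p N/\binom{N}{p}\le \sum_p p(p-1)\alpha_p |I_t| / N = \lambda |I_t|/N$ (using $\binom{N}{p-2}/\binom{N}{p} = p(p-1)/((N-p+1)(N-p+2)) \le p(p-1)/N^2$ for the relevant range, up to the harmless constant factors — one should be mildly careful here but it works out). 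Summing over the at most $N$ vertices $v\in S_t$ gives $\e(|I_{t+1}|\mid\mathcal F_t)\le \lambda|I_t|$, and iterating from $|I_0|=1$ yields $\e|I_t|\le\lambda^t$.

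For the second moment, write $|I_{t+1}|^2 = \sum_{v,w\in S_t}\1\{v\in I_{t+1}\}\1\{w\in I_{t+1}\}$ and split into the diagonal $v=w$ and off-diagonal $v\ne w$ contributions. The diagonal contributes $\e|I_{t+1}|\le\lambda^{t+1}$. For the off-diagonal term, $v$ and $w$ are both infected if either (a) there exist \emph{two} hyperedges, one responsible for $v$ and one for $w$ (each as above), or (b) a \emph{single} hyperedge containing both $v$ and $w$ and meeting $I_t$. Case (a) is handled by independence: conditionally on $\mathcal F_t$ the two events for $v$ and $w$ through disjoint hyperedge families are positively correlated only through shared edges containing both, so bounding crudely gives a contribution $\le (\lambda|I_t|/N)^2$ per pair, i.e.\ $\le \lambda^2|I_t|^2$ after summing over $\le N^2$ pairs — feeding in $\e|I_t|^2$ this generates the $\sum_{k=t}^{2t}\lambda^k$ geometric-type tail once the recursion is unrolled. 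Case (b) is where the parameter $\lambda'=\sum_p p(p-1)(p-2)\alpha_p$ enters: the number of size-$p$ hyperedges through both $v$ and $w$ meeting $I_t$ is at most $|I_t|\binom{N}{p-3}$, so this contributes $\le \sum_p p(p-1)(p-2)\alpha_p |I_t|/N^2 = \lambda'|I_t|/N^2$ per pair, hence $\le \lambda'|I_t|$ after summing over pairs; this is where the shift $I_t \to I_{t-1}$ and the factor $\lambda'$ in the second sum come from once one accounts for the fact that $|I_t|$ is itself controlled by the previous step. Assembling these pieces gives a recursion of the form $\e|I_{t+1}|^2 \le \lambda^2 \e|I_t|^2 + (\text{lower order in }\lambda^{t})$, which unrolls to the stated bound.

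The main obstacle I expect is \emph{bookkeeping the correlations correctly} in the second-moment estimate: one must be careful that the hyperedges responsible for infecting $v$ and those responsible for $w$ are drawn from overlapping families (edges containing both $v$ and $w$ lie in both), so a naive claim of independence is false — the fix is exactly the (a)/(b) dichotomy above, isolating the ``shared edge'' contribution as the $\lambda'$ term. A secondary nuisance is tracking the exact exponents of $\lambda$ through the recursion so that the telescoped sum matches $\sum_{k=t}^{2t}\lambda^k + \lambda'\sum_{k=t-1}^{2(t-1)}\lambda^k$ rather than something merely comparable; this is routine but requires care with the base cases $t=0,1$.
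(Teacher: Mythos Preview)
Your proposal is correct and follows essentially the same route as the paper: a union bound over infecting hyperedges gives $\e(|I_{t+1}|\mid\mathcal F_t)\le\lambda|I_t|$, and for the second moment the off-diagonal pair $(v,w)$ is split into the single-shared-hyperedge case (producing the $\lambda'$ term) and the two-separate-hyperedges case (producing $\lambda^2|I_t|^2$), yielding the recursion $\e|I_{t+1}|^2\le(\lambda+\lambda')\e|I_t|+\lambda^2\e|I_t|^2$, which unrolls to the stated bound. The only place to tighten is the binomial-coefficient arithmetic you flagged: to land on exactly $\lambda|I_t|$ (no stray constant), count the hyperedges through $v$ and a designated vertex of $I_t$ as $\binom{N-2}{p-2}$ rather than $\binom{N}{p-2}$, so that $\binom{N-2}{p-2}\big/\binom{N}{p}=p(p-1)/(N(N-1))$ exactly, and then use $|S_t|\le N-1$; the analogous exact count $\binom{N-3}{p-3}$ handles case~(b).
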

\begin{proof}
    Let $t\in\mathbb Z_+$.
      Observe that $v\in I_{t+1} $ only if there is a $p$-hyperedge in $E_{t+1} $ connecting $v$ to some vertex in $I_t $ for some $2\le p \le \Delta$, and similarly,  for $v\ne v' \in S_t $, $\{v, v'\} \subseteq I_{t+1} $ only if either there is a $p$-hyperedge in $E_{t+1} $ connecting $\{v,v'\}$ to some vertex in $I_t $ for some $2\le p \le \Delta$ or there is a pair $(e_1,e_2)$ of a $p_1$-hyperedge $e_1\in E_{t+1}  $ and a $p_2$-hyperedge $e_2\in E_{t+1}  $ for some $2\le p_1,p_2\le \Delta$ such that $v\in V(e_1)\setminus V(e_2)$ and $v'\in V(e_2)\setminus V(e_1)$. 
 \begin{figure}[h]
    \centering\label{figure 1}
    \begin{minipage}{0.43\textwidth}
        \centering
        \includegraphics[width=0.65\textwidth]{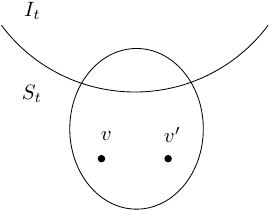} 
    \end{minipage}\hfill
    \begin{minipage}{0.55\textwidth}
        \centering
        \includegraphics[width=0.65\textwidth]{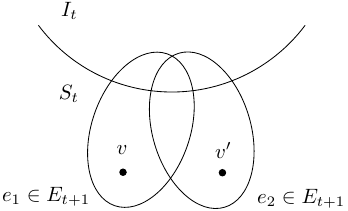} 
    \end{minipage}
    \caption{(Left panel): $v, v'$ belonging to the same hyperedge. (Right panel): $v, v'$ belonging to different hyperedges.}
\end{figure}

Hence, \begin{align}\label{eq: stochastic dominance: 0}
        \p(v\in I_{t+1} |\mathcal F_t)&\le q_{t}, \nonumber
        \\ \p(v, v'\in I_{t+1} |\mathcal F_t)&\le  q'_t,  
        \end{align}
         where \begin{align*}
             q_t&\colonequals  1- \prod_{2\le p \le \Delta}\Bigl(1-\frac{\alpha_p N}{{N\choose p}}\Bigr)^{{N-2 \choose p-2} |I_t |}  \\
             q'_t &\colonequals 1- \prod_{3\le p \le \Delta}\Bigl(1-\frac{\alpha_p N}{{N\choose p}}\Bigr)^{{N-3 \choose p-3} |I_t |}+ \Bigl(1- \prod_{2\le p \le \Delta}\Bigl(1-\frac{\alpha_p N}{{N\choose p}}\Bigr)^{{N-3 \choose p-2} |I_t |}\Bigr)^2. 
         \end{align*}
         Using the elementary bound $1-\prod_{2\le p \le \Delta}(1-x_p)^{n_p}\le \sum_{2\le p \le \Delta}n_p x_p$ for $n_p \in \mathbb Z_+$ and $x_p\in (0,1),$ which follows from the convexity of the function $t\mapsto \prod_{2\le p \le \Delta}(1-tx_p)^{n_p}$ on the interval $(0,1)$, 
        \begin{align*}
            q_t &\le \sum_{2\le p \le \Delta}\frac{\alpha_p N}{{N \choose p}} {N-2 \choose p-2} |I_t|= \frac{\lambda|I_t|}{N-1},\\
            q'_t &\le \sum_{3\le p \le \Delta}\frac{\alpha_p N}{{N\choose p}}{N-3\choose p-3}|I_t|+ \Bigl(\sum_{2\le p \le \Delta}\frac{\alpha_p N}{{N\choose p}}{N-3\choose p-2}|I_t|\Bigr)^2 \le \frac{\lambda'|I_t|}{(N-1)(N-2)}+  \frac{\lambda^2|I_t|^2}{(N-1)^2}.
        \end{align*}
         From the previous display and the trivial bound $|S_t |\le N-1$, we have \begin{align*}
        q_{t}  |S_{t} | &\le \lambda |I_{t} |,
        \\q_t' |S_t | (|S_t|-1) &\le \lambda' |I_t | + \lambda^2 |I_t |^2.
    \end{align*} 
       Now, since $|I_{t+1} |= \sum_{v\in S_t }\1_{\{v\in I_{t+1} \}}$, \eqref{eq: stochastic dominance: 0} implies the recursions
\begin{align}
    \e |I_{t+1} | &\le q_{t} \e  |S_t | \le \lambda \e |I_t|, \label{eq: recursion: 1}\\
    \e |I_{t+1} |^2 &\le  q_{t} \e  |S_t | +q'_{t} \e  |S_t| (|S_t|-1)\le (\lambda+\lambda') \e |I_t | + \lambda^2\e |I_t |^2 .\label{eq: recursion: 2}
\end{align}
Equation \eqref{eq: recursion: 1} and the initial condition $|I_0|=1$ readily implies $\e|I_t|\le \lambda^t$.
Plugging this into \eqref{eq: recursion: 2}, we have \[ \e |I_{t+1} |^2 \le  (\lambda+\lambda') \lambda^t + \lambda^2\e |I_t |^2 ,\] and an induction easily yields \[\e|I_{t}|^2\le (\lambda+\lambda')\sum_{k=0}^{t-1}\lambda^{t-1+k} +\lambda^{2t}.\]
\end{proof}

\begin{lemma}\label{lem:vol_growth_random_hypg}
Assume that $\lambda > 1$. Then there exists a constant $C$ depending only on $(\alpha_p)_{2\le p \le \Delta}$ such that
for any $t \in \mathbb{Z}_+$, 
\[ \e | B_t(i)| \le C \lambda^{t}. \]
\end{lemma}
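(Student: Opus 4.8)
The statement to prove is $\e|B_t(i)| \le C\lambda^t$ for a constant $C$ depending only on $(\alpha_p)_{2\le p\le\Delta}$, assuming $\lambda>1$. The obvious route is to exploit the identification $B_t(i) = R_{t+1} = \dot\bigcup_{s\le t} I_s$ from Proposition~\ref{prop: properties of the exploration process}(v), which reduces the claim to controlling $\sum_{s=0}^{t}\e|I_s|$. From Lemma~\ref{lem: expectation of infected} we already have $\e|I_s|\le\lambda^s$, so
\[
\e|B_t(i)| = \sum_{s=0}^{t}\e|I_s| \le \sum_{s=0}^{t}\lambda^s = \frac{\lambda^{t+1}-1}{\lambda-1} \le \frac{\lambda}{\lambda-1}\,\lambda^t,
\]
where the last inequality uses $\lambda>1$. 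Thus one may take $C = \lambda/(\lambda-1)$, which depends only on $\lambda$ and hence only on $(\alpha_p)_{2\le p\le\Delta}$.

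\textbf{Key steps, in order.} First, invoke Proposition~\ref{prop: properties of the exploration process}(v) to write $B_t(i) = \dot\bigcup_{s\le t} I_s$ as a disjoint union, so that $|B_t(i)| = \sum_{s=0}^{t}|I_s|$ exactly (the disjointness is what makes this an equality rather than an inequality). Second, take expectations and apply the first bound of Lemma~\ref{lem: expectation of infected}, namely $\e|I_s|\le\lambda^s$, termwise. Third, sum the geometric series and use $\lambda>1$ to bound $\sum_{s=0}^t\lambda^s$ by $\frac{\lambda}{\lambda-1}\lambda^t$. One small point to note: the exploration process and hence $B_t(i)$ depend on the choice of starting vertex $i$, but the bound $\e|I_s|\le\lambda^s$ holds for every $i$, so the resulting bound on $\e|B_t(i)|$ is uniform in $i\in[N]$.

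\textbf{Main obstacle.} There is essentially no obstacle here — this lemma is a direct corollary of Lemma~\ref{lem: expectation of infected} together with the exploration-process identity $B_t = R_{t+1}$. The only thing requiring a moment's care is making sure the geometric-series constant is expressed purely in terms of $\lambda$ (so it qualifies as "depending only on $(\alpha_p)$"), which is automatic since $\lambda>1$ is assumed. The genuinely substantive work — the recursive estimates on $\e|I_t|$ and $\e|I_t|^2$ via the infection probabilities $q_t, q_t'$ — has already been carried out in the proof of Lemma~\ref{lem: expectation of infected}; this lemma merely packages the first moment bound into the volume-growth form needed downstream in the proof of Theorem~\ref{thm: diluted mixed p spin} (specifically for establishing the locally-tree-like property in Lemma~\ref{lem: nbd hypertree}).
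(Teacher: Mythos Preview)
Your proof is correct and follows exactly the same approach as the paper: invoke Proposition~\ref{prop: properties of the exploration process}(v) to write $|B_t(i)| = \sum_{s=0}^t |I_s|$, then apply the first-moment bound $\e|I_s|\le\lambda^s$ from Lemma~\ref{lem: expectation of infected} and sum the geometric series. You have actually supplied more detail than the paper (which leaves the geometric sum implicit), including the explicit constant $C=\lambda/(\lambda-1)$.
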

\begin{proof}
    By Proposition~\ref{prop: properties of the exploration process}(v), $| B_t(i)|  = \sum_{s=0}^t |I_s|$. The lemma now follows from the estimate of $\e|I_s|$ in Lemma \ref{lem: expectation of infected}.
\end{proof}

\begin{lemma}\label{lem: nbd hypertree}
    Assume $\lambda>1$. Let $0<\eps<1$ and $\delta = (2\log \lambda)^{-1}(1-\eps)$. We have \[\p ( \text{$B_{\lfloor \delta \log N \rfloor}(i)$ contains a Berge cycle})\le C N^{-\eps}\] for some constant $C>0$ depending only on $(\alpha_p)_{2\le p \le \Delta}$. 
\end{lemma}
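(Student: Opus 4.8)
\textbf{Proof proposal for Lemma~\ref{lem: nbd hypertree}.} The plan is to run the exploration process from the fixed vertex $i$ and bound the probability that a Berge cycle appears within the first $T\colonequals \lfloor \delta\log N\rfloor$ generations by a first-moment (union bound) argument. A Berge cycle inside $B_T(i)$ must be ``closed'' at some step $t\le T$: that is, at step $t+1$ of the exploration, a newly revealed hyperedge $e\in E_{t+1}$ either contains two vertices of $I_t$, or contains a vertex that is placed into $I_{t+1}$ together with another hyperedge revealed at the same step that also reaches that vertex, or more simply, two distinct revealed hyperedges in $\bigcup_{s\le T}E_s$ share a vertex that was not previously removed. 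Concretely, I would say that no Berge cycle has formed in $B_T(i)$ as long as, for every $t\le T$, each hyperedge $e\in E_{t+1}$ satisfies $|V(e)\cap I_t|=1$ and the sets $V(e)\setminus I_t$ over $e\in E_{t+1}$ are pairwise disjoint; a cycle is created precisely when one of these two conditions fails at some step. Hence
\[
\p(\text{$B_T(i)$ has a Berge cycle})\le \sum_{t=0}^{T-1}\p(\text{a cycle-closing event occurs at step } t+1).
\]

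The key steps, in order: (1) Condition on $\mathcal F_t$ and on $|I_t|$. Given $\mathcal F_t$, the hyperedges in $E_{t+1}$ are obtained by independent inclusion (with the diluted probabilities) of all admissible hyperedges touching $I_t$ and avoiding $R_t$. A cycle is closed at step $t+1$ if (a) some included $p$-hyperedge has at least two vertices in $I_t$, or (b) some included $p$-hyperedge has at least two vertices in $R_t\setminus$ current (this cannot happen by the avoidance rule, so it reduces to a hyperedge with two vertices in $I_t$), or (c) two distinct included hyperedges share a vertex of $S_t$. I would bound the conditional probability of each type by a union bound over (pairs of) hyperedges, exactly as in the estimates $q_t,q'_t$ in Lemma~\ref{lem: expectation of infected}: a single hyperedge hitting $I_t$ in $\ge 2$ points has probability $O(|I_t|^2/N^2)$ summed over the $O(N^{p-2})$ choices for a $p$-hyperedge, giving $O(|I_t|^2/N)$; a colliding pair of hyperedges, each hitting $I_t$ once and sharing one vertex of $S_t$, has probability $O(|I_t|^2/N^2)$ as well after summing. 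So conditionally,
\[
\p(\text{cycle closed at step }t+1\mid \mathcal F_t)\le \frac{C_0\,|I_t|^2}{N}
\]
for a constant $C_0$ depending only on $(\alpha_p)_{2\le p\le\Delta}$. (2) Take expectations and invoke Lemma~\ref{lem: expectation of infected}: $\e|I_t|^2\le C_1\lambda^{2t}$ for $\lambda>1$ (absorbing the $\lambda'$ term, which is of lower order since $\lambda>1$), so $\p(\text{cycle at step }t+1)\le C_0 C_1\lambda^{2t}/N$. (3) Sum over $t=0,\dots,T-1$: the geometric sum is dominated by its last term, giving
\[
\p(\text{$B_T(i)$ has a Berge cycle})\le \frac{C_2\,\lambda^{2T}}{N}\le \frac{C_2\,\lambda^{2\delta\log N}}{N}=C_2\,N^{2\delta\log\lambda-1}=C_2\,N^{-\eps},
\]
using $2\delta\log\lambda=1-\eps$. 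This is the claimed bound.

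The main obstacle — and the only place requiring real care — is Step~(1): formulating precisely which local configurations in the exploration process create a Berge cycle and verifying that every cycle through $B_T(i)$ indeed forces such a configuration at some step $t+1\le T$, so that the union bound is legitimate. One has to be a little careful that a Berge cycle can ``use'' hyperedges revealed at different generations, and argue that the first generation at which the cycle's vertex set becomes fully explored is a step at which either a hyperedge doubles back into the already-explored set $R_t\cup I_t$ or two simultaneously revealed hyperedges collide on a new vertex; both are covered by the two event types above. The probability estimates themselves are routine and parallel to the computation of $q_t$ and $q'_t$ already carried out in the proof of Lemma~\ref{lem: expectation of infected}. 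A minor technical point is that $|I_t|$ can in principle be as large as $N$, but on that event the bound $C_0|I_t|^2/N\le C_0 N$ is vacuous and the overall estimate still follows from the expectation bound on $|I_t|^2$; alternatively one restricts to the (overwhelmingly likely) event $\{|I_t|\le N^{(1-\eps')/2}\}$ for small $\eps'$ and handles its complement by Markov. I would present the clean version using $\e|I_t|^2$ directly.
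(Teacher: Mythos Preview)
Your proposal is correct and follows essentially the same route as the paper: run the exploration process, observe that a Berge cycle must be closed at some step either by a newly revealed hyperedge hitting $I_t$ in two vertices or by two newly revealed hyperedges colliding on a fresh vertex, bound each conditional probability by $O(|I_t|^2/N)$ via the same first-moment computations as for $q_t,q_t'$, take expectations using the second-moment bound $\e|I_t|^2\le C\lambda^{2t}$ from Lemma~\ref{lem: expectation of infected}, and sum the geometric series. The paper formalizes your Step~(1) exactly as you anticipate, calling the two event types Case~1 and Case~2 and bounding them by Markov's inequality on the counts $A_t$ and $D_t$; your concern about the argument that every Berge cycle in $B_T(i)$ forces one of these two events is handled there simply by appealing to property~(iii) of Proposition~\ref{prop: properties of the exploration process}.
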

\begin{proof}
    We run the exploration process  $(R_t,  I_t, S_t, E_t)_{t\in\mathbb Z_+}$ starting at the vertex $i$. Suppose there is no cycle until step $t-1$.
    Because of (iii) of Proposition \ref{prop: properties of the exploration process},  there are only two  ways to create a potential cycle  at step $t \ge 1$:
    
    \smallskip
    \noindent Case 1. A hyperedge $e\in E_{t}$ contains two distinct vertices $v\neq v'$ in  $I_{t-1}$.
    
    \smallskip
    \noindent Case 2. Two distinct hyperedges $e_1 \ne e_2$ in $E_{t}$ have a common vertex $v$ in $I_t$.

\smallskip
    
\begin{figure}[h]
    \centering
    \begin{minipage}{0.45\textwidth}
        \centering
        \includegraphics[width=0.50\textwidth]{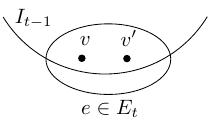} 
    \end{minipage}
    \begin{minipage}{0.45\textwidth}
        \centering
        \includegraphics[width=0.60\textwidth]{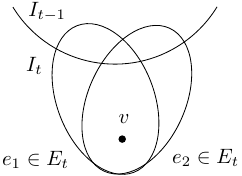} 
    \end{minipage}
 \caption{Left panel depicts Case~1 and right panel depicts Case~2.}
\end{figure}    

    Let $A_t$ and $D_t$ denote the number of hyperedges corresponding to Case 1 and  Case 2 respectively.
    Given $\mathcal F_{t-1}$, $A_t$ is stochastically dominated by $\sum_{2\le p \le \Delta}Y_p$, where $(Y_p)_{2\le p \le \Delta}$ are independent binomial random variables with distribution \begin{equation*}
        Y_p \sim \text{Bin}\Biggl(   |I_{t-1}|^2 {N-2\choose p-2}, \frac{\alpha_p N}{{N\choose p}} \Biggr),
    \end{equation*}
    so by linearity of expectation, we have  
    \begin{equation}\label{eq: stochastic dominance 2}
        \e (A_t|\mathcal F_{t-1}) \le \sum_{p=2}^\Delta \e(Y_p|\mathcal F_{t-1}) =\sum_{p=2}^\Delta  |I_{t-1}|^2 {N-2\choose p-2} \frac{\alpha_p N}{{N\choose p}} =\frac{\lambda |I_{t-1}|^2}{N-1}.
    \end{equation}
    Let us write \[D_t=\sum_{2\le p_1, p_2 \le \Delta} \sum_{|V(e_l)|=p_\ell,\ \ell=1,2} \1_{\{(e_1,e_2)\text{ satisfies Case 2}\}},\] 
    and note that the indicators in the previous display may be dependent if $p_1=p_2$.
    Nonetheless, the number of all such pairs in Case 2 is at most
     \begin{equation*}
        \sum_{p_1,p_2=2}^\Delta N |I_{t-1}|^2 {N-2\choose p_1-2}{N-2\choose p_2-2},
    \end{equation*}
    so we have \begin{equation}\label{eq: stochastic dominance 3}
        \e(D_t|\mathcal F_{t-1}) \le \sum_{p_1,p_2=2}^\Delta N |I_{t-1}|^2 {N-2\choose p_1-2}{N-2\choose p_2-2} \frac{\alpha_{p_1} N}{{N\choose p_1}}\frac{\alpha_{p_2} N}{{N\choose p_2}} =\frac{\lambda^2 N|I_{t-1}|^2}{(N-1)^2}.
    \end{equation}
    By Markov's inequality, \eqref{eq: stochastic dominance 2}, and \eqref{eq: stochastic dominance 3}, we have \[\p(A_t\ge 1 \text{ or } D_t\ge 1|\mathcal F_{t-1})\le \e(A_t|\mathcal F_{t-1})+\e(D_t|\mathcal F_{t-1})\le   \frac{\lambda |I_{t-1}|^2 }{N-1}+ \frac{\lambda^2 N |I_{t-1}|^2}{(N-1)^2}.\] 
    Taking another expectation and summing over $1\le t\le 1+\delta \log N $, we have that for sufficiently large $N$ and $C>0$ depending only on $\lambda$,
    \begin{equation}\label{eq: probability of hypertree}
        \p ( \text{$B_{\lfloor \delta \log N \rfloor}(i) $ contains a cycle}) \le \sum_{1\le t\le 1+\delta \log N}\p(A_t\ge 1 \text{ or } D_t\ge 1)\le \frac{C}{N} \sum_{0\le t \le \delta \log N}\e |I_t|^2.  
    \end{equation}
     On the other hand, from Lemma \ref{lem: expectation of infected}, 
     there is another constant $C$ depending only on $\lambda$ and $\lambda'$ such that \[\sum_{0\le t \le \delta \log N}\e |I_t|^2\le C N^{2\delta \log \lambda}.\]
     Together with \eqref{eq: probability of hypertree}, this finishes the proof.
\end{proof}

Similar to the proof of the disorder chaos in the mixed even $p$-spin short-range model (Section \ref{sec: proof of generalized short-range model}), for any $i,j\in [N]$, we view $\la \sigma_i\sigma_j \ra$ as a measurable function $\phi_{ij}(J,E)$ of the disorder $J$ and the edge set $E$, which is bounded in modulus by $1$. 
Let $\hat\phi_{ij} (n,E) \colonequals \e_J (\phi_{ij}(J,E)h_n(J))$ for any $i,j \in [N]$.


Now, we combine the results obtained so far.
Below $\e$ represents the expectation with respect to two layers of randomness: the disorder and the random graph.
\begin{lemma}\label{lem: upperbound: hypertree}
      Let $\eps>0$ and $\delta = (2\log \lambda)^{-1}(1-\eps)$. Then, for both types of perturbations and any $i,j\in [N]$, we have \[\e \la \sigma_i \sigma_j \tau_i \tau_j \ra_t \le   C N^{-\eps}+  \e e^{-t\min (d(i,j),\delta \log N)}, \]
        where $C>0$ is the constant in Lemma \ref{lem: nbd hypertree}.
\end{lemma}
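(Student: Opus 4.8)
The plan is the following. Fix distinct $i,j \in [N]$ and set $r = \lfloor \delta \log N\rfloor$; it suffices to treat finite $\beta$, the case $\beta=\infty$ following by sending $\beta\to\infty$ exactly as in the proof of Theorem~\ref{thm: mixed p spin disorder chaos}. The expectation $\e$ now carries two independent sources of randomness, the disorder $J$ and the random edge set $E$, and the whole argument is a matter of conditioning on $E$, running the Hermite-spectral estimate in the $J$-variable, and then taking the outer expectation over $E$. Concretely, I would split according to the event
\[ \mathcal G = \bigl\{\, B_r(i) \text{ is a hypertree in } G \,\bigr\}. \]
On $\mathcal G^c$ I would use nothing beyond the deterministic bound $|\la \sigma_i\sigma_j\tau_i\tau_j\ra_t| \le 1$, which holds since $\la \sigma_i\sigma_j\tau_i\tau_j\ra_t = \phi_{ij}(J,E)\,\phi_{ij}(J(t),E)$ is a product of two Gibbs averages each bounded by $1$ in modulus. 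Hence the contribution of $\mathcal G^c$ to $\e\la\sigma_i\sigma_j\tau_i\tau_j\ra_t$ is at most $\p(\mathcal G^c)$, and Lemma~\ref{lem: nbd hypertree} bounds this by $C N^{-\eps}$ with the constant from that lemma.

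On $\mathcal G$ I would work with $E$ fixed. Taking $\e_J$ of the factorized product Gibbs average and invoking Lemma~\ref{lem: semigroup: main} gives
\[ \e_J\bigl(\phi_{ij}(J,E)\,\phi_{ij}(J(t),E)\bigr) \;=\; \sum_{n\in\mathbb Z_+^E} e^{-|n|t}\,\hat\phi_{ij}(n,E)^2 \]
for the continuous perturbation, and the same identity with $e^{-|E(n)|t}$ in place of $e^{-|n|t}$ for the discrete one; since $|n|\ge |E(n)|$, in both cases the right-hand side is at most $\sum_n e^{-|E(n)|t}\hat\phi_{ij}(n,E)^2$. On $\mathcal G$ the ball $B_r(i)$ is a hypertree, so Proposition~\ref{prop: hypertree lemma} applies with this $r$ and forces $|E(n)|\ge \min(r,d(i,j))$ whenever $\hat\phi_{ij}(n,E)\ne 0$. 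Together with Parseval's identity $\sum_n \hat\phi_{ij}(n,E)^2 = \e_J\la\sigma_i\sigma_j\ra^2 \le 1$, this yields
\[ \sum_{n\in\mathbb Z_+^E} e^{-|E(n)|t}\,\hat\phi_{ij}(n,E)^2 \;\le\; e^{-t\min(r,\,d(i,j))}\sum_{n} \hat\phi_{ij}(n,E)^2 \;\le\; e^{-t\min(r,\,d(i,j))}. \]
Since $d(i,j)$ and $r$ are integers, $\min(r,d(i,j))=\min(\lfloor\delta\log N\rfloor, d(i,j))$; writing $\delta\log N$ for its integer part as elsewhere and recombining the two pieces gives $\e\la\sigma_i\sigma_j\tau_i\tau_j\ra_t \le C N^{-\eps} + \e\, e^{-t\min(d(i,j),\,\delta\log N)}$, the remaining expectation being over the graph randomness through which $d(i,j)$ (and the threshold $r$) is random.

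I do not anticipate a genuine obstacle: every ingredient — the factorization of the product Gibbs average, the semigroup identity of Lemma~\ref{lem: semigroup: main}, the geometric vanishing of Proposition~\ref{prop: hypertree lemma}, and the hypertree probability estimate of Lemma~\ref{lem: nbd hypertree} — is already available. The one point that needs care is the bookkeeping of the two layers of randomness: Proposition~\ref{prop: hypertree lemma} and Lemma~\ref{lem: semigroup: main} are statements about the disorder for a \emph{fixed} graph, so one must condition on $E$ (in particular on $\mathcal G$) before invoking them and only afterwards take $\e$ over $E$. Keeping straight the floor $\lfloor\delta\log N\rfloor$ in the exponent, and noting that on $\mathcal G^c$ one truly has no control other than the trivial bound $1$, are the only minor nuisances.
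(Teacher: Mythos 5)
Your proposal is correct and follows essentially the same route as the paper: condition on the graph, apply Lemma~\ref{lem: semigroup: main} together with Proposition~\ref{prop: hypertree lemma} and Parseval for the fixed edge set, and control the non-hypertree event via Lemma~\ref{lem: nbd hypertree}. The only cosmetic difference is that the paper bounds the low-frequency part of the Hermite sum directly by the indicator of the cycle event rather than first splitting on $\mathcal G$, which amounts to the same estimate (and the floor/ceiling issue with $\lfloor\delta\log N\rfloor$ you flag is glossed over in the paper as well and is harmless).
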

\begin{proof}
    Define $\e_J$ to be the expectation conditioned on the graph.
   For both types of perturbations and any $i,j\in [N]$, Lemma \ref{lem: semigroup: main} implies \begin{align}
    \e_J \la \sigma_i \sigma_j \tau_i \tau_j \ra_t \le &\sum_{n:|E(n)|< \min(d(i,j),\delta \log N)} e^{-|E(n)|t}\hat \phi_{ij} (n,E)^2 \nonumber \\&\qquad +\sum_{n:|E(n)|\ge \min(d(i,j),\delta \log N)} e^{-|E(n)|t}\hat \phi_{ij} (n,E)^2. \label{eq: upperbound:hypertree:eq1}
    \end{align}
    Note that by Parseval's identity $\sum_{n\in \mathbb Z_+^E} \hat \phi_{ij} (n,E) ^2 = \e_J \phi_{ij} (J,E)^2 \le 1$.
Thus, an application of Proposition~\ref{prop: hypertree lemma} with $r = \delta \log n$ yields
\[ \sum_{n:|E(n)|< \min(d(i,j),\delta \log N)} e^{-|E(n)|t}\hat \phi_{ij} (n,E)^2 \le \1_{\{  \text{$B_{\lfloor\delta \log N \rfloor}(i)$ contains a berge cycle}\} }, \]
since on the complement of the event on the RHS above, $B_{\delta \log n}(i)$ is a hypertree and hence, $\hat \phi_{ij}(n,E)=0$ whenever $|E(n)|<\min( d(i,j), \delta \log N)$. Thanks to Lemma~\ref{lem: nbd hypertree}, the first term in \eqref{eq: upperbound:hypertree:eq1}, after taking an expectation with respect to the randomness of the hypergraph, 
is bounded above by $C N^{-\eps}$. The second term in \eqref{eq: upperbound:hypertree:eq1} can be trivially bounded above by $\e \exp(-t\min(d(i,j),\delta \log N))$ from another use of Parseval's identity.
\end{proof}

\begin{proof}[\bf Proof of Theorem \ref{thm: diluted mixed p spin}]
By symmetry of the diluted mixed $p$-spin model and Lemma \ref{lem: upperbound: hypertree}, we have
\begin{align} 
\e  \la R(\sigma,\tau)^2\ra_t  &=  \frac{1}{N^2} \sum_{1\le i,j\le N} \e \la \sigma_i\sigma_j\tau_i \tau_j\ra_t= \frac{1}{N} \sum_{1\le j\le N} \e \la \sigma_1\sigma_j\tau_1 \tau_j\ra_t \nonumber\\
&\le     C N^{-\eps}+ \frac{1}{N} \sum_{1\le j\le N} \e e^{-t \min(d(1,j),\delta \log N)}\nonumber\\
&\le  C N^{-\eps} + \frac{1}{N}  \e |B_{\lfloor \delta \log N \rfloor}(1) |  + \frac{1}{N} \e \sum_{j: d(1, j) > \delta \log N}  e^{ -t \delta \log N }  \nonumber\\
&\le   C  N^{-\eps} + \frac{1}{N}    \e |B_{\lfloor \delta \log N \rfloor}(1) |  + N^{-\delta t}. \label{eq: upperbound: R^2: 0}
\end{align}
By Lemma~\ref{lem:vol_growth_random_hypg}, the second term of \eqref{eq: upperbound: R^2: 0} is bounded by 
$\e |B_{\lfloor \delta \log N \rfloor}(1)|\le C N^{\delta \log \lambda }$.
Plugging into \eqref{eq: upperbound: R^2: 0}, we obtain \[\e \la R(\sigma,\tau)^2\ra_t \le C N^{-\eps}+ C N^{-1 +\delta \log \lambda}+N^{-\delta t}.\]
Finally, the proof is complete by recalling the definition $\delta= (2\log \lambda)^{-1}(1-\eps)$ to write the previous display as \[\e \la R(\sigma,\tau)^2\ra_t \le 2C N^{-\eps}+N^{-t(1-\eps)/(2\log \lambda)}\] for some constant $C>0$, and then optimizing over $\eps$.
\end{proof}

\section{Proof of Theorem \ref{chaosLevy}}	\label{sec: proof of levy model}
Our proof of Theorem \ref{chaosLevy} is inspired by Chatterjee's semigroup method for the SK model in \cite{Chatterjeebook} that consisted of two major steps. The first is similar to Lemma \ref{add:lem1}$(ii)$ established below that allows one to control $\e \la R(\sigma,\tau)^2\ra_t$ for small $t$ via its behavior for large $t$. The second step aims to control the large $t$ case, which relies on controlling of the derivatives of $\e\la R(\sigma,\tau)^2\ra_t$ for any orders via the Gaussian integration by parts. However, this step is not relevant to our case as the heavy-tailed distribution does not have a finite second moment. To tackle this difficulty, we adapt the convexity argument from Chen-Lam \cite{chen2024universality} instead and the control is made possible through a critical use of the Gaussian hypercontractivity.

We begin our proof by the following lemma that is essentially taken from Chatterjee \cite{Chatterjeebook}.
Let $k\geq 1$ be a fixed integer. 
Let $J$ be a $k$-dimensional standard normal random vector and ${J}^1$ and ${J}^2$ i.i.d. copies of $J.$  
For the sake of symmetry, we set $J^1(t)=e^{-t/2}{J}+\sqrt{1-e^{-t}}{J}^1$ and $J^2(t)=e^{-t/2}{J}+\sqrt{1-e^{-t}}{J}^2$ for $t \ge 0$. Notice that the Gaussian vector $(J^1(t),J^2(t))$ has the same distribution as $(J,J(t))$ in Definition \ref{def: perturbation}.

 \begin{lemma}\label{add:lem1}
		Let $m\ge 1$ and $L_1,\ldots,L_m:\mathbb{R}^k\to \mathbb{R}$ be measurable functions.  Assume that \[
		\max_{1\leq r\leq m}\e |L_r({J})|^{2}<\infty.\]
    Let $\phi(t):=\sum_{r=1}^m\e L_r(J^1(t))L_r(J^2(t))$.	
    For any $0\leq s\leq t $, we have
		\begin{enumerate}
			\item[$(i)$]  $0\leq \phi(t)\leq \phi(s),$
			\item[$(ii)$] 
			$\phi(s)\leq \phi(t)^{\frac{s}{t}}\phi(0)^{1-\frac{s}{t}}.$
		\end{enumerate}
	\end{lemma}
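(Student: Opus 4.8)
\textbf{Proof proposal for Lemma~\ref{add:lem1}.}

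The plan is to use the Hermite/Ornstein--Uhlenbeck spectral decomposition for the symmetrized Gaussian perturbation, exactly as in Lemma~\ref{lem: semigroup: main}, and then read off both monotonicity and log-convexity of $\phi$ directly from the Fourier side. First I would expand each $L_r$ in the Hermite basis of $L^2(\mathbb{R}^k,\mu^{\otimes k})$: writing $L_r = \sum_{n\in\mathbb Z_+^k}\hat L_r(n)\,h_n$ (valid since $\e|L_r(J)|^2<\infty$), I would compute $\e\bigl(h_n(J^1(t))h_n(J^2(t))\bigr)$. Because $J^1(t)$ and $J^2(t)$ share the common part $e^{-t/2}J$ and have independent fresh parts, conditioning on $J$ and using the OU semigroup identity $\e(h_n(J^i(t))\mid J)=e^{-|n|t/2}h_n(J)$ gives $\e\bigl(h_n(J^1(t))h_n(J^2(t))\bigr)=e^{-|n|t}\1_{\{n=m\}}$ after a second application of orthonormality. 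Hence
\[
\phi(t)=\sum_{r=1}^m\sum_{n\in\mathbb Z_+^k} e^{-|n|t}\,\hat L_r(n)^2 .
\]
This is the key reduction; everything else is an elementary statement about such a Dirichlet-type series.

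For part $(i)$: each summand $e^{-|n|t}\hat L_r(n)^2$ is nonnegative, so $\phi(t)\ge 0$; and each summand is nonincreasing in $t$ (since $|n|\ge 0$), so $\phi(t)\le\phi(s)$ whenever $s\le t$. For part $(ii)$: I would fix $0\le s\le t$, write $s=(1-\theta)\cdot 0+\theta\cdot t$ with $\theta=s/t\in[0,1]$, and observe that for each fixed $n$,
\[
e^{-|n|s}=\bigl(e^{-|n|t}\bigr)^{\theta}\bigl(e^{-|n|\cdot 0}\bigr)^{1-\theta}.
\]
Thus $\phi(s)=\sum_{r,n}\bigl(e^{-|n|t}\hat L_r(n)^2\bigr)^{\theta}\bigl(\hat L_r(n)^2\bigr)^{1-\theta}$, and applying H\"older's inequality with exponents $1/\theta$ and $1/(1-\theta)$ to the pair of nonnegative sequences $\bigl(e^{-|n|t}\hat L_r(n)^2\bigr)_{r,n}$ and $\bigl(\hat L_r(n)^2\bigr)_{r,n}$ yields
\[
\phi(s)\le\Bigl(\sum_{r,n}e^{-|n|t}\hat L_r(n)^2\Bigr)^{\theta}\Bigl(\sum_{r,n}\hat L_r(n)^2\Bigr)^{1-\theta}=\phi(t)^{s/t}\phi(0)^{1-s/t}.
\]
(Equivalently, one can phrase this as: $u\mapsto\log\phi(u)$ is convex, being a limit of logs of sums of exponentials $\log\sum c_n e^{-|n|u}$ with $c_n\ge 0$, and then $(ii)$ is the convexity inequality between $0$, $s$, $t$; the boundary cases $s=0$ and $s=t$, and the degenerate case $\phi(t)=0$, are handled separately and are immediate.)

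The only genuine point requiring care is the first step: justifying that the mixed expectation $\e\bigl(h_n(J^1(t))h_n(J^2(t))\bigr)$ really collapses to $e^{-|n|t}\1_{n=m}$, and that term-by-term manipulation of the resulting double series is legitimate. The first is a clean conditional-expectation computation using independence of the fresh increments and the tensorization $h_n=\prod_k h_{n_k}$; the second is justified by absolute convergence, since $\sum_{r,n}\hat L_r(n)^2=\sum_r\e L_r(J)^2<\infty$ dominates every series in sight (all coefficients $e^{-|n|u}\in(0,1]$). So I do not expect a real obstacle here — the lemma is essentially a repackaging of Lemma~\ref{lem: semigroup: main} together with H\"older, and the main ``work'' is simply setting up the symmetrized semigroup identity correctly.
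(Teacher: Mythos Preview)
Your proposal is correct and follows essentially the same route as the paper: derive the spectral representation $\phi(t)=\sum_{r}\sum_{n}e^{-|n|t}\hat L_r(n)^2$ via the OU semigroup (Lemma~\ref{lem: semigroup: main}), then read off nonnegativity and monotonicity for $(i)$ and apply H\"older with exponents $t/s$ and $t/(t-s)$ for $(ii)$. The only cosmetic slip is the line $\e\bigl(h_n(J^1(t))h_n(J^2(t))\bigr)=e^{-|n|t}\1_{\{n=m\}}$, where you presumably meant $\e\bigl(h_n(J^1(t))h_m(J^2(t))\bigr)$.
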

\begin{proof}
It is immediate to see that Lemma \ref{lem: semigroup: main} implies $(i)$. 
Moreover, $(ii)$ follows from H\"older's inequality: 
\begin{align*}
    \phi(s)&=\sum_{r=1}^m \sum_{n\in \mathbb Z_+^{k}}e^{-s|n|} \widehat L_r (n)^2\\
    &\le \Bigl( \sum_{r=1}^m \sum_{n\in \mathbb Z_+^{k}} e^{-s|n|\frac{t}{s}} \widehat L_r (n)^2\Bigr)^{\frac{s}{t}} \Bigl(\sum_{r=1}^m \sum_{n\in \mathbb Z_+^{k}}1\cdot \widehat L_r (n)^2\Bigr)^{1-\frac{s}{t}}
    = \phi(t)^{\frac{s}{t}}\phi(0)^{1-\frac{s}{t}}. 
\end{align*}
\end{proof}

    Define \[H_{N,t}^k(\sigma)= \frac{1}{a_N}\sum_{i,j=1}^N \rho(J_{ij}^k(t))\sigma_i\sigma_j, \quad \sigma \in \{\pm 1\}^N, \, \, k=1,2.\]
	For any $t\ge 0$ and $\lambda>0,$ consider the coupled free energy
	\begin{align*}
		F_N(t,\lambda)&:=\frac{1}{N}\e\ln \sum_{\sigma,\tau}\exp\bigl(\beta H_{N,t}^1(\sigma)+\beta H_{N,t}^2(\tau)+\lambda NR(\sigma,\tau)^2\bigr).
	\end{align*}
Let $\la \cdot\ra_{t,\lambda}$ be the Gibbs expectation associated to this free energy.
We use the following consequence of the Gaussian hypercontractive inequality (see, e.g., page 45 in \cite{Chatterjeebook}): for any function $f:\mathbb R^k \to \mathbb R_+$ such that $f\in L^{1+e^{-t}}(J)$, \[\e f(J^1(t)) f(J^2(t)) \le (\e f(J)^{1+e^{-t}})^{2/(1+e^{-t})}.\] 
	
	\begin{lemma}\label{add:lem2}
		For any $\lambda>0,$ $ t> -\ln (\alpha-1)$, and $1<b<\alpha/(1+e^{-t})$, we have that
		\begin{align*}
			F_N(t,\lambda)-F_N(\infty,\lambda)&\leq \frac{2N\beta^2}{a_N^2}\frac{1}{1-e^{-t}} \bigl(\e |J|^{b/(b-1)}\bigr)^{2(b-1)/b} \bigl(\e |\rho|^{b(1+e^{-t})}(J) \bigr)^{2/((1+e^{-t})b)}.
		\end{align*}
	\end{lemma}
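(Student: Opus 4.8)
\textbf{Proof proposal for Lemma~\ref{add:lem2}.}
The plan is to interpolate between the free energy at parameter $t$ and at $t=\infty$ by controlling the $t$-derivative of $F_N(t,\lambda)$, exactly as in Chatterjee's semigroup treatment of the SK model, but with every Gaussian integration-by-parts step replaced by the hypercontractive estimate so that no second moment of the disorder is needed. First I would write, with $\phi(t) := F_N(t,\lambda)$ thought of through the coupled Gibbs measure $\la\cdot\ra_{t,\lambda}$, the telescoping identity
\[
F_N(\infty,\lambda)-F_N(t,\lambda) = \int_t^\infty \frac{d}{ds} F_N(s,\lambda)\,ds,
\]
and compute $\frac{d}{ds}F_N(s,\lambda)$. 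Since $J^1(s)$ and $J^2(s)$ are each Ornstein--Uhlenbeck evolutions of the common Gaussian $J$, differentiating the coupled partition function in $s$ and using the generator of the OU semigroup produces a sum, over coordinates (i.e.\ over pairs $(i,j)$ indexing the disorder $J_{ij}$), of Gibbs correlations of terms of the form $\partial_{J}\rho(J^1(s))\cdot(\text{spin products})$; the key point is that this derivative is bounded by a Gibbs average of $|\rho'|$ or, after a further integration by parts in the shared Gaussian, of $|\rho|\cdot|J|$ against the product OU kernel, with an extra factor $(1-e^{-s})^{-1}$ coming from the variance of the OU bridge.

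The heart of the argument is then to bound the resulting expectation $\e\bigl[ f(J^1(s)) f(J^2(s)) \cdot(\text{bounded Gibbs weights})\bigr]$ where $f$ is (up to the $|J|$ factor) $|\rho|$ composed with a single coordinate. Here I would invoke the stated Gaussian hypercontractive inequality: for $f\in L^{1+e^{-s}}$,
\[
\e f(J^1(s)) f(J^2(s)) \le \bigl(\e f(J)^{1+e^{-s}}\bigr)^{2/(1+e^{-s})}.
\]
Because the integrand also carries a factor of $|J|$ (from the integration by parts in the shared variable), I would first split it off by H\"older with exponents $b$ and $b/(b-1)$ — this is exactly where the parameter $b\in(1,\alpha/(1+e^{-s}))$ and the condition $s> -\ln(\alpha-1)$ enter: the latter guarantees $\alpha/(1+e^{-s})>1$ so such a $b$ exists, and $b(1+e^{-s})<\alpha$ guarantees $\e|\rho|^{b(1+e^{-s})}(J)<\infty$ by the moment bound \eqref{eq: integrability of heavy tails}. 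Applying hypercontractivity to the $|\rho|^b$ piece and a trivial bound to the $|J|^{b/(b-1)}$ piece (which has all Gaussian moments) yields the product of the two moment factors in the claimed bound, with the overall normalization $N\beta^2/a_N^2$ coming from the $1/a_N$ scaling in the Hamiltonian (squared, since two replicas appear) times the $N$ coordinates; the constant $2$ absorbs the two replicas and the symmetrization of the derivative.

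The main obstacle I anticipate is organizing the differentiation of $F_N(s,\lambda)$ cleanly: one must verify that the $\lambda NR(\sigma,\tau)^2$ term, which does not depend on $s$, contributes nothing to $\frac{d}{ds}$ beyond reweighting the Gibbs measure, and that the derivative of the disorder-dependent part produces only \emph{nonpositive} contributions of the correct sign (so that $F_N(\infty,\lambda)\ge F_N(t,\lambda)$ up to the error term, rather than an equality with uncontrolled sign) — this is the analogue of the fact that $\phi$ in Lemma~\ref{add:lem1} is non-increasing, and it should follow because the OU generator acting under the log is, after integration by parts, a variance-type (hence one-signed after the relevant Gibbs manipulation) quantity. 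A secondary technical point is justifying the integration by parts and differentiation under the expectation when $\rho$ is merely non-decreasing and measurable rather than smooth; I would handle this by a routine mollification of $\rho$ and a limiting argument, noting that all the bounds obtained are in terms of $L^p(J)$ norms of $\rho$ that are stable under mollification. Once the derivative bound
\[
-\frac{d}{ds}F_N(s,\lambda) \le \frac{C N\beta^2}{a_N^2}\,\frac{1}{1-e^{-s}}\bigl(\e|J|^{b/(b-1)}\bigr)^{2(b-1)/b}\bigl(\e|\rho|^{b(1+e^{-s})}(J)\bigr)^{2/((1+e^{-s})b)}
\]
is in hand, integrating from $t$ to $\infty$ — using that $\int_t^\infty (1-e^{-s})^{-1}\,ds$ diverges, so in fact one should integrate only the $s$-independent majorant over a bounded window or, more carefully, fix $b$ and the exponents at the value $s=t$ and use monotonicity in $s$ of the relevant moment factor to pull everything out — gives the stated estimate with $(1-e^{-t})^{-1}$ in front. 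I would double-check that last step, since the naive integral $\int_t^\infty(1-e^{-s})^{-1}ds=\infty$; the resolution is that the correct derivative identity yields a total variation of $F_N$ between $t$ and $\infty$ that telescopes to a single boundary term of size $O((1-e^{-t})^{-1})$ rather than an integral of $(1-e^{-s})^{-1}$, which is presumably how Chatterjee's original computation is structured.
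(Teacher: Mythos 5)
Your high-level plan (interpolate in $t$, integrate by parts in the Gaussians, split off $|J|$ by H\"older with exponents $b$ and $b/(b-1)$, apply hypercontractivity to the $|\rho|^b$ piece, and mollify a general $\rho$) matches the paper's proof. But there is a genuine gap at the step you yourself flag: your proposed derivative bound carries a factor $(1-e^{-s})^{-1}$, the resulting integral $\int_t^\infty(1-e^{-s})^{-1}\,ds$ diverges, and your guessed resolution (a ``telescoping boundary term'') is not what happens. The actual mechanism is an exact algebraic cancellation that you have not identified. Differentiating $J^r(s)=e^{-s/2}J+\sqrt{1-e^{-s}}J^r$ does produce a singular coefficient $e^{-s}(1-e^{-s})^{-1/2}$ on the $J^r$ term, but integrating by parts in $J^r$ brings out a compensating $\sqrt{1-e^{-s}}$ from the chain rule, and moreover the terms involving $\rho''(J^1(s))$ and the same-replica product $\rho'(J^1(s))\rho'(J^1(s))$ cancel \emph{exactly} between the two integration-by-parts identities (the one in the shared Gaussian $J$ and the one in $J^1$). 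What survives is the clean, nonsingular formula
\begin{align*}
\partial_s F_N(s,\lambda)=\frac{-e^{-s}\beta^2}{N a_N^2}\sum_{i,j}\e\,\rho'(J_{ij}^1(s))\rho'(J_{ij}^2(s))\bigl\la\sigma_i^1\sigma_j^1(\tau_i^1\tau_j^1-\tau_i^2\tau_j^2)\bigr\ra_{s,\lambda}.
\end{align*}
This cancellation is not cosmetic: without it you are left with $\e\,\rho'(J^1(s))^2$-type terms, which may be infinite for heavy-tailed $\rho$ (the paper's remark after the lemma makes exactly this point), so no amount of H\"older/hypercontractivity applied to your version of the derivative will close the argument.

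From the clean formula the rest goes differently from what you describe. One bounds the Gibbs bracket by $2$, notes from Lemma~\ref{add:lem1}$(i)$ (applied to $L=\rho'$) that $s\mapsto\e\,\rho'(J^1(s))\rho'(J^2(s))$ is nonnegative and non-increasing, and pulls that factor out of the integral at $s=t$; the remaining $\int_t^\infty e^{-s}\,ds\le 1$ is harmless. Only \emph{after} this does one integrate by parts once more at the fixed time $t$ to convert $\rho'$ back to $\rho$, via $\e\,\rho'(J^1(t))\rho'(J^2(t))=(1-e^{-t})^{-1}\e\,J^1J^2\rho(J^1(t))\rho(J^2(t))$ — this single step, not an integral over $s$, is the source of the $(1-e^{-t})^{-1}$ in the statement. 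Your H\"older/hypercontractivity endgame then applies verbatim to $\e\,J^1J^2\rho(J^1(t))\rho(J^2(t))$. You should also note that the mollification has to be done carefully (the paper uses a compactly supported cutoff $\rho_R=\rho\,\zeta_R$ first, then a Gaussian smoothing for general $\rho$), precisely because the intermediate identities involve $\rho'$ and $\rho''$, which need not exist or be integrable for the original $\rho$.
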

	
	\begin{proof}
	\textbf{Case 1: Smooth $\rho$.}	
 For any $R>0,$ let $\zeta_R:\mathbb{R}\to [0,1]$ be a smooth function satisfying that $\zeta_R\equiv 1$ on $[-R,R]$ and $\zeta_{R}\equiv 0$ on $[-(R+1),R+1]^c$. We further assume that $\sup_{R>0}\|\zeta_R'\|_\infty<\infty.$ Let $\rho_R=\rho\zeta_R$. Let $F_N^R(t,\lambda)$ be the same as $F_N(t,\lambda)$ with the replacement of $\rho$ by $\rho_R$ and let $G_{N,t,\lambda}^R$ be the Gibbs measure associated to $F_N^R(t,\lambda).$ Denote by $(\sigma^1,\sigma^2), (\sigma^1,\tau^1),(\sigma^2,\tau^2)$ the i.i.d. samples of $G_{N,t,\lambda}^R$ and by $\la \cdot\ra_{t,\lambda}^R$ the corresponding Gibbs expectation.  Note that $\rho_R$ is twice differentiable and its derivatives are uniformly bounded. A direct computation gives that
		\begin{align*}
			\partial_tF_N^R(t,\lambda)&=\frac{\beta}{2 N a_N}\sum_{i,j=1}^N \e\Bigl\la \bigl(- e^{-t/2} J_{ij}+ e^{-t}(1-e^{-t})^{-1/2}J_{ij}^1\bigr)\rho_R'(J_{ij}^1(t))\sigma_i\sigma_j\\
			&\qquad\qquad\qquad\qquad+ \bigl(- e^{-t/2} J_{ij}+ e^{-t}(1-e^{-t})^{-1/2}J_{ij}^2\bigr)\rho_R'(J_{ij}^2(t))\tau_i\tau_j\Big\ra_{t,\lambda}^R.
		\end{align*}
		Here, using the Gaussian integration by parts yields
		\begin{align}	
		\nonumber	\e J_{ij}\bigl\la \rho_R'(J_{ij}^1(t))\sigma_i\sigma_j\bigr\ra_{t,\lambda}^R
			&=\frac{e^{-t/2} \beta}{a_N}\e\rho_R'(J_{ij}^1(t))\rho_R'(J_{ij}^1(t))\bigl\la\sigma_i^1\sigma_j^1(\sigma_i^1\sigma_j^1-\sigma_i^2\sigma_j^2)\bigr\ra_{t,\lambda}^R\\
	\nonumber		&+ \frac{e^{-t/2} \beta}{a_N}\e\rho_R'(J_{ij}^1(t))\rho_R'(J_{ij}^2(t))\bigl\la\sigma_i^1\sigma_j^1(\tau_i^1\tau_j^1-\tau_i^2\tau_j^2)\bigr\ra_{t,\lambda}^R\\
	\label{add:eq5}		&+ e^{-t/2}\e\bigl \la \rho_R''(J_{ij}^1(t))\sigma_i\sigma_j\bigr\ra_{t,\lambda}^R .
		\end{align}
		and
		\begin{align}	
		\nonumber	 (1 - e^{-t})^{-1/2} \e J_{ij}^1 \bigl\la \rho_R'(J_{ij}^1(t))\sigma_i\sigma_j\bigr\ra_{t,\lambda}^R
			&=\e\bigl \la \rho_R''(J_{ij}^1(t))\sigma_i\sigma_j\bigr\ra_{t,\lambda}^R\\
		\label{add:eq6}	&+ \frac{\beta}{a_N}\e\rho_R'(J_{ij}^1(t))\rho_R'(J_{ij}^1(t))\bigl\la\sigma_i^1\sigma_j^1(\sigma_i^1\sigma_j^1-\sigma_i^2\sigma_j^2)\bigr\ra_{t,\lambda}^R.
		\end{align}
		Adding these together leads to 
		\begin{align*}
			&\e\Bigl\la \bigl(- e^{-t/2} J_{ij}+ e^{-t}(1-e^{-t})^{-1/2}J_{ij}^1\bigr)\rho'(J_{ij}^1(t))\sigma_i\sigma_j\Bigr\ra_{t,\lambda}^R\\
			&=  \frac{-e^{-t}\beta}{a_N}\e\rho_R'(J_{ij}^1(t))\rho_R'(J_{ij}^2(t))\bigl\la\sigma_i^1\sigma_j^1(\tau_i^1\tau_j^1-\tau_i^2\tau_j^2)\bigr\ra_{t,\lambda}^R.
		\end{align*}
		By symmetry, we then have
		\begin{align}\label{add:eq4}
            \partial_tF_N^R(t,\lambda)&=\frac{-e^{-t}\beta^2}{N a_N^2}\sum_{i,j=1}^N\e\rho_R'(J_{ij}^1(t))\rho_R'(J_{ij}^2(t))\bigl\la\sigma_i^1\sigma_j^1(\tau_i^1\tau_j^1-\tau_i^2\tau_j^2)\bigr\ra_{t,\lambda}^R,
		\end{align}
		which implies that for $b>1$ such that $b(1+e^{-t})<\alpha$ and its conjugate exponent $a=b/(b-1)$,
		\begin{align}
			\nonumber F_N^R(t,\lambda)-F_N^R(\infty,\lambda)&= -\int_t^\infty \partial_sF_N^R(s,\lambda)ds \leq \int_t^\infty\frac{2Ne^{-s}\beta^2}{a_N^2}\e\rho_R'(J^1(s))\rho_R'(J^2(s))ds 
   \\ \nonumber &\leq \frac{2N \beta^2}{a_N^2}\e \rho_R'(J^1(t))\rho_R'(J^2(t)) = \frac{2N\beta^2}{a_N^2}\frac{1}{1-e^{-t}} \e J^1J^2 \rho_R(J^1(t)) \rho_R(J^2(t))
   \\\nonumber   &\le \frac{2N\beta^2}{a_N^2}\frac{1}{1-e^{-t}} (\e |J|^a)^{2/a} (\e |\rho_R|^b(J^1(t)) |\rho_R|^b(J^2(t)))^{1/b}
   \\    &\le \frac{2N\beta^2}{a_N^2}\frac{1}{1-e^{-t}} (\e |J|^a)^{2/a} (\e |\rho_R|^{b(1+e^{-t})}(J) )^{2/((1+e^{-t})b)},			\label{eq: truncated difference}
		\end{align}
	   where the inequalities used Lemma \ref{add:lem1}$(i),$ Gaussian integration by parts, H\"older's inequality, and Gaussian hypercontractive inequality.

  Finally, we argue that our assertion follows by taking $R\to\infty.$ To see this, note that
		\begin{align}\label{eq: truncated F}
			\nonumber&\bigl|F_N(t,\lambda)-F_N^R(t,\lambda)\bigr|\\
			\nonumber&=\frac{1}{N}\Bigl|\e\ln \Bigl\la\exp\frac{\beta}{a_N}\sum_{i,j=1}^N\Bigl(\bigl(\rho(J^1_{ij}(t))-\rho_R(J^1_{ij}(t))\bigr)\sigma_i\sigma_j+\bigl(\rho(J^2_{ij}(t))-\rho_R(J^2_{ij}(t))\bigr)\tau_i\tau_j\Bigr)\Bigr\ra_{t,\lambda}^R\Bigr|\\
			\nonumber&\leq \frac{\beta}{N a_N}\sum_{i,j=1}^N\e\bigl(\bigl|\rho(J^1_{ij}(t))-\rho_R(J^1_{ij}(t))\bigr|+\bigl|\rho(J^2_{ij}(t))-\rho_R(J^2_{ij}(t))\bigr|\bigr)\\
			&=2\beta N a_N^{-1} \e \bigl|\rho(J)-\rho_R(J)\bigr|,
		\end{align}
		where the inequality used the Jensen inequality. Since $\e |\rho (J)|<\infty$ implies
		\begin{align*}
			\e\bigl|\rho(J)-\rho_R(J)\bigr|&\leq \e\bigl[|\rho(J)|;|J|\geq R\bigr]\to 0,\,\,\mbox{as $R\to\infty$,}
		\end{align*}
		we have $\lim_{R\to\infty}F_{N}^R(t,\lambda)=F_N(t,\lambda)$ for any $ t \ge 0.$ 
        Additionally, from \eqref{eq: integrability of heavy tails}, we have  \[\e |\rho|^{b(1+e^{-t})}(J) < \infty \quad  \text{and}\quad \lim_{R\to\infty}\e \bigl(|\rho| ^{b(1+e^{-t})}(J) - |\rho_R|^{b(1+e^{-t})}(J)\bigr)  = 0,\] hence the right-hand side of \eqref{eq: truncated difference} converges as well.

        \textbf{Case 2: General $\rho$.} Define $\rho_{(y)}(x)=\e \rho (x+yJ')$ for $x,y\in \mathbb R$ where $J'$ is standard normal. 
        As $\rho_{(y)}$ is smooth, it belongs to the first case of the proof and enjoys the inequality of the lemma.
       Since $\e |\rho(J)|^p <\infty$, from a standard approximation argument, $\rho_{(y)}$ approximates $\rho(J)$ in $L^p$ for any $1\le p < \alpha$, i.e., $\e |\rho(J)- \rho_{(y)}(J)|^p \to 0$ as $y\downarrow 0$ for any $1\le p<\alpha$.
        Define $F^{(y)}_N(t,\lambda)$ to be $F_N(t,\lambda)$ with $\rho$ replaced by $\rho_{(y)}$.
        The calculations in \eqref{eq: truncated F} show that as $y \downarrow 0$, \[|F^{(y)}_N(t,\lambda)-F_N(t,\lambda)|\le 2\beta  N a_N^{-1} \e |\rho(J)-\rho_{(y)}(J)| \to 0. \] 
        This completes our proof.

  
	\end{proof}
	
	\begin{remark}
		\rm It might look like that \eqref{add:eq4} already holds for $F_N$ following the same argument for $F_N^R$ and one does not need the approximate argument in the proof above. However, in view of \eqref{add:eq5} and \eqref{add:eq6}, it does not seem to be easy to see that
		$$
		\e\rho'(J^1_{ij}(t))\rho'(J^1_{ij}(t))\bigl\la\sigma_i^1\sigma_j^1(\sigma_i^1\sigma_j^1-\sigma_i^2\sigma_j^2)\bigr\ra_{t,\lambda}
		$$
		is finite since it might well be the case $\e\rho'(J)^2=\infty$.
	\end{remark}
	\begin{proof}
		[\bf Proof of Theorem \ref{chaosLevy}] Let $t_0> -\log (\alpha-1)$ and $0<\lambda_0<1/2$ be fixed. Note that \eqref{eq: integrability of heavy tails} ensures that  $\e|\rho(J)|^{1+e^{-t_0}}<\infty.$ From the convexity of $F(t_0,\cdot)$, $F_N(t_0,0)=F_N(0,0),$ and Lemma~\ref{add:lem2}, there exists a constant $K>0$ independent of $N$ such that
		\begin{align*}
			\lambda_0\e\la R(\sigma,\tau)^2\ra_{t_0}&\leq F_N(t_0,\lambda_0)-F_N(t_0,0)\\
			&\leq F_N(\infty,\lambda_0)-F_N(\infty,0)+\frac{KN}{a_N^2}\\
			&=\frac{1}{N}\e\ln\bigl\la \exp \lambda_0 NR(\sigma, \tau)^2\bigr\ra_\infty+\frac{KN}{a_N^2}\\
			&\leq \frac{1}{N}\ln\e\bigl\la \exp \lambda_0 NR(\sigma, \tau)^2\bigr\ra_\infty+\frac{KN}{a_N^2}.
		\end{align*}
		Observe that since $(\rho(J_{ij}^1))_{i,j\in[N]}$ and $(\rho(J_{ij}^2))_{i,j\in[N]}$ are i.i.d. and symmetric, we see that with respect to the measure $\e\la \cdot\ra_\infty$, $NR(\sigma,\tau)$ is equal to $B_1+\cdots+B_N$ in distribution for i.i.d. Rademacher $(1/2)$ random variables $B_1,\ldots,B_N.$ Consequently,
		\begin{align*}
			\e\bigl\la \exp \lambda_0 NR(\sigma,\tau)^2\bigr\ra_\infty &=\e \exp \frac{\lambda_0}{N}(B_1+\cdots+B_N)^2\leq \frac{1}{\sqrt{1-2\lambda_0}}.
		\end{align*}
  Observe that \[\la R(\sigma,\tau)^2\ra_t = \frac{1}{N^2}\sum_{i,j=1}^N \phi_{ij}(J^1(t))\phi_{ij}(J^2(t)),\]
    where for each $(i,j)\in [N]^2$, we define the bounded
    function $\phi_{ij}:\mathbb R^{N^2} \to \mathbb R$ by \[\phi_{ij}(x)= \Bigl(\sum_{\sigma}\exp\Bigl(\frac{\beta}{a_N}\sum_{k,l=1}^N\rho (x)\sigma_k\sigma_l\Bigr)\Bigr)^{-1} \sum_{\sigma}\sigma_i\sigma_j \exp\Bigl(\frac{\beta}{a_N}\sum_{k,l=1}^N\rho (x)\sigma_k\sigma_l\Bigr), \quad x\in \mathbb R^{N^2}.\]
		Hence, there exists a constant $K'$ such that for any $t\ge t_0$ and $N\geq 1,$
		\begin{align*}
			\e\la R(\sigma,\tau)^2\ra_{t} \leq \e\la R(\sigma,\tau)^2\ra_{t_0}&\leq \frac{1}{\lambda_0}\Bigl(\frac{1}{N}\ln \frac{1}{\sqrt{1-2\lambda_0}}+\frac{KN}{a_N^2}\Bigr)\leq \frac{K'N}{a_N^2},
		\end{align*}
	where the first inequality used Lemma \ref{add:lem1}$(i)$. 
		Note that $\e\la R(\sigma,\tau)^2\ra_{0}\leq 1$. From Lemma \ref{add:lem1}$(ii),$ for any $0\le t \le t_0$ and $N\geq 1,$
		\begin{align*}
			\e\la R(\sigma,\tau)^2\ra_{t}&\leq (\e\la R(\sigma,\tau)^2\ra_{t_0})^{t/t_0}\leq K'' \Bigl(\frac{N}{a_N^2}\Bigr)^{ t/ t_0},
		\end{align*}
		where $K''\geq K'$ is a constant independent of $t\in [0,t_0]$ and $N\geq 1.$
		Putting these together, we conclude that
		\begin{align*}
			\e\la R(\sigma,\tau)^2\ra_{t}&\leq K''\Bigl(\frac{N}{a_N^2}\Bigr)^{\min( 1, t/t_0)} .
		\end{align*}
    The proof is now complete in view of the estimate for $a_N$ obtained from \eqref{potter} and \eqref{eq: a_N}.
	\end{proof}

\begin{appendix}
\section{Proof of item 2 in Remark \ref{rem:Theorem_1.3}}
Throughout this section, set $\phi_{ij}(J)=\la \sigma_i\sigma_j\ra$ for $1\le i,j\le N$. 
The case for the discrete perturbation is quite simple.

\begin{proposition}
Under the discrete perturbation, if $t\le |E|^{-1}$, then \[\e \la R(\sigma,\tau)^2\ra_t  \ge  e^{-1}\e\la R(\sigma^1,\sigma^2)^2\ra.\]
\end{proposition}
\begin{proof}
Equation \eqref{eq: semigroup: discrete} and Parseval's identity immediately shows \begin{align*}
    \e \la R(\sigma,\tau)^2\ra_t  = \frac{1}{N^2} \sum_{1\le i,j\le N} \e \la \sigma_i\sigma_j\tau_i \tau_j\ra_t &= \frac{1}{N^2}\sum_{1\le i,j\le N } \sum_{n\in\Z_+^E} e^{-t |E(n)|} \hat \phi_{ij}(n)^2 \\&\ge e^{-1}\frac{1}{N^2}\sum_{1\le i,j\le N } \e \la \sigma_i \sigma_j\ra ^2 = e^{-1}\e\la R(\sigma^1,\sigma^2)^2\ra.
\end{align*}
\end{proof}

Now, let us consider the continuous perturbation.
\begin{proposition} \label{prop: continuous perturbation: lower bound 1}
    Suppose that for each $2\le p\le \Delta$, $\rho_p$ is thrice differentiable and \[\max( \|\rho'_p\|_{\infty},\| \rho_p''\|_{\infty},\| \rho_p'''\|_{\infty}) <\infty .\]
    Under continuous perturbation, if $t\le |E|^{-5/2-\eps}$ for some $\eps>0$, then $ \e \la R(\sigma,\tau)^2\ra_{ t}\ge  c\e\la R(\sigma^1,\sigma^2)^2\ra $ for some constant $c>0,$ independent of $\beta$.
\end{proposition}
In the special case of the Gaussian disorder, we have another lower bound on the perturbed overlap, which has a better dependence on $|E|$ but gets worse as $\beta\to\infty.$
\begin{proposition}\label{prop: continuous perturbation: lower bound 2}
    Suppose $\rho_p(x)=x$ for $x\in\mathbb R$ and any $2\le p\le \Delta$. Under the continuous perturbation, for any $t\ge0$, \[ \e \la R(\sigma,\tau)^2\ra_t \ge  \e \la R(\sigma,\tau)^2\ra_0 - 6\sqrt{t}\sqrt{\beta} |E|^{3/4}.\]
    In particular, $\e \la R(\sigma,\tau)^2\ra_t >2^{-1}\e\la R(\sigma^1,\sigma^2)^2\ra$ holds if 
    \[t\le   12^{-2}\beta^{-1}|E|^{-3/2} (\e \la R(\sigma^1,\sigma^2)^2\ra)^2.\]
\end{proposition}
In the rest of the appendix, we provide the proofs of Propositions \ref{prop: continuous perturbation: lower bound 1} and \ref{prop: continuous perturbation: lower bound 2}.
First, we make the following observation that relates the overlap under the continuous perturbation to the one under the discrete perturbation.
\begin{lemma}\label{prop: discrete and continuous}
 Let $\phi:\mathbb R^E \to \mathbb R$ be such that $\e \phi^2(J)\le 1$.
    For $t\ge 0$, set \begin{equation*}
	f(t)=\sum_{n\in \mathbb{Z}_+^E}e^{-t|E(n)|}\hat \phi(n)^2, \quad g(t)=\sum_{n\in \mathbb{Z}_+^E}e^{-t|n|}\hat \phi(n)^2.
\end{equation*}
    For any $0<\gamma<2^{-1}$ and $m\geq 1$ satisfying $2\gamma m<1,$ we have that 
	\begin{align*}
		f(t)&\leq g(\gamma t)+\frac{1}{((\gamma m)^{-1}-1)(\gamma^{-1}-2)}\sum_{a\in E}\e \bigl\|\nabla \partial_{x_a}^2 \phi(J)\bigr\|^2+e^{-tm}.
	\end{align*}
\end{lemma}
\begin{proof}
Let $0<\gamma<1.$ Write
\begin{align*}
    f(t)&=\sum_{|E(n)|\geq \gamma |n|}e^{-t|E(n)|}\hat \phi(n)^2+\sum_{|E(n)|<\gamma |n|}e^{-t|E(n)|}\hat \phi(n)^2\\
    &\leq \sum_{|E(n)|\geq \gamma |n|}e^{-\gamma t|n|}\hat \phi(n)^2+\sum_{|E(n)|<\gamma |n|}e^{-t|E(n)|}\hat \phi(n)^2\\
    &\leq g(\gamma t)+\sum_{|E(n)|<\gamma |n|}e^{-t|E(n)|}\hat \phi(n)^2.
\end{align*}
Here, for any $m\geq 1,$ we further write
\begin{align*}
    \sum_{|E(n)|<\gamma |n|}e^{-t|E(n)|}\hat \phi(n)^2&=\sum_{|E(n)|<\gamma |n|,|E(n)|\leq m}e^{-t|E(n)|}\hat \phi(n)^2+\sum_{|E(n)|<\gamma |n|,|E(n)|>m}e^{-t|E(n)|}\hat \phi(n)^2\\
    &\leq \sum_{|E(n)|<\gamma |n|,|E(n)|\leq m}e^{-t|E(n)|}\hat \phi(n)^2+e^{-tm}\sum_{|E(n)|<\gamma |n|,|E(n)|>m}\hat \phi(n)^2\\
    &\leq \sum_{|E(n)|<\gamma |n|,|E(n)|\leq m}e^{-t|E(n)|}\hat \phi(n)^2+e^{-tm},
\end{align*}
where the last inequality used $\e \phi(J)^2=\sum_{n}\hat \phi(n)^2\leq 1.$
To control the sum in the last inequality, we write
\begin{align*}
	\sum_{|E(n)|<\gamma |n|,|E(n)|\leq m}e^{-t|E(n)|}\hat \phi(n)^2&=
	\sum_{1\leq |E(n)|<\gamma |n|,|E(n)|\leq m}e^{-t|E(n)|}\hat \phi(n)^2\\
	&\leq  \sum_{a\in E}\sum_{|n|\geq\gamma^{-1},|E(n)|\leq m,n_a\geq \gamma^{-1}/m }\hat \phi(n)^2\\
	&\leq  \sum_{a\in E}\sum_{|n|\geq \gamma^{-1},n_a\geq\gamma^{-1}/m }\hat \phi(n)^2,
\end{align*}
where the first inequality holds due to the pigeonhole principle.
Now, for any $n$ satisfying $|n|\geq \gamma^{-1}$ and $n_a\geq (\gamma m)^{-1}$, we can write
\begin{align*}
	\hat \phi(n)
	&=\e \Bigl(\prod_{b\neq a}h_{n_b}(J_b)\Bigr)h_{n_a}(J_a)\phi(J)\\
	&=\frac{1}{\sqrt{n_a(n_a-1)}}\e \Bigl(\prod_{b\neq a}h_{n_b}(J_b)\Bigr)h_{n_a-2}(J_a)\partial_{x_a}^2\phi(J)
\end{align*}
so that
\begin{align*}
	\hat \phi(n)^2
	&\leq \frac{1}{n_a-1}\Bigl(\e \Bigl(\prod_{b\neq a}h_{n_b}(J_b)\Bigr)h_{n_a-2}(J_a)\partial_{x_a}^2\phi(J)\Bigr)^2.
\end{align*}
Consequently,
\begin{align*}
	\sum_{|n|\geq \gamma^{-1},n_a\geq \gamma^{-1}/m }\hat \phi(n)^2&\leq\frac{1}{(\gamma m)^{-1}-1} \sum_{\substack{n_a-2+(|n|-n_a)\geq\gamma^{-1}-2,
    \\n_a-2\geq\gamma^{-1}/m-2 }}\Bigl(\e \Bigl(\prod_{b\neq a}h_{n_b}(J_b)\Bigr)h_{n_a-2}(J_a)\partial_{x_a}^2\phi(J)\Bigr)^2\\
	&= \frac{1}{(\gamma m)^{-1}-1}\sum_{|n|\geq\gamma^{-1}-2,n_a\geq\gamma^{-1}/m-2 }\widehat{ \partial_{x_a}^2f}(n)^2\\
	&\leq \frac{1}{((\gamma m)^{-1}-1)(\gamma^{-1}-2)}\|\nabla \partial_{x_a}^2f\|^2,
\end{align*}
where the last inequality used Lemma \ref{lem: improved Poincare}. Putting these bounds together yields our assertion.
\end{proof}

\begin{lemma}[Equation (6.3), \cite{chatterjee2014superconcentration}]\label{lem: improved Poincare}
	For any differentiable $\phi:\mathbb{R}^E\to\mathbb{R}$ with $\e \phi(J)^2<\infty$ and $\e \|\nabla \phi(J)\|^2<\infty$ and $m\geq 1,$ we have that
	\begin{align*}
		\sum_{|n|\geq m}\hat \phi(n)^2\leq \frac{\e \|\nabla \phi(J)\|^2}{m}.
	\end{align*}
\end{lemma}
\begin{proof}[Proof of Proposition \ref{prop: continuous perturbation: lower bound 1}]
    Notice that $ \max_{2\le p\le \Delta}\max( \|\rho'_p\|_{\infty},\| \rho_p''\|_{\infty}\| \rho_p'''\|_{\infty}) <\infty $ implies that \[\sum_{a\in E}\e \bigl\|\nabla \partial_{x_a}^2 \phi_{ij}(J)\bigr\|^2\le K |E|^2\] for some constant $K>0$.
Let $\delta= \eps/2$,  $m= |E|^{1+2\delta}$, and $\gamma = t |E|$.
Notice that $t\ll |E|^{-5/2-\delta}$.
Moreover, $\gamma^2m\ll |E|^{-2}$, $tm\gamma^{-1}\gg1$, and, therefore,  Lemma \ref{prop: discrete and continuous} implies that \begin{align*}
    \e \la R(\sigma,\tau)^2\ra_{ t}  = \frac{1}{N^2} \sum_{1\le i,j\le N} \e \la \sigma_i\sigma_j\tau_i \tau_j\ra_{ t} &= \frac{1}{N^2}\sum_{1\le i,j\le N } \sum_{n\in\Z_+^E} e^{- t |n|} \hat\phi_{ij}(n) ^2 \\&\ge \frac{1}{N^2}\sum_{1\le i,j\le N } \bigl( e^{-1} \e \la \sigma_i\sigma_j\ra^2 - o(1)\bigr)= c\e\la R(\sigma^1,\sigma^2)^2\ra 
\end{align*} for some constant $c>0$.
\end{proof}

\begin{proof}[Proof of Proposition \ref{prop: continuous perturbation: lower bound 2}]
    For the sake of symmetry,
we set $J^1(t)=e^{-t/2}{J}+\sqrt{1-e^{-t}}{J}^1$ and $J^2(t)=e^{-t/2}{J}+\sqrt{1-e^{-t}}{J}^2$ for $t \ge 0$, where $J, J^1, J^2$ are i.i.d.\ standard Gaussian random variables. 
Notice that the Hamiltonians satisfy \[\partial_t H^i_t= -2^{-1}e^{-t/2} H_{J} + 2^{-1}e^{-t}(1-e^{-t})^{-1/2} H_{J^i}, \quad i=1,2.\]
We can compute the derivative \begin{align*}
    \beta^{-1}\frac{d}{dt}\e \la R(\sigma,\tau)^2\ra_t &=   \e \bigl\la  R(\sigma^1,\tau^1)^2 \bigl(\partial_t H^1_t(\sigma^1)+\partial_t H^2_t (\tau^1)- \partial_t H^1_t(\sigma^2)-\partial_t H^2_t (\tau^2)  \bigr)\bigr\ra_t
    \\&= \e \bigl\la  R(\sigma^1,\tau^1)^2 \bigl(\partial_t H^1_t(\sigma^1)- \partial_t H^1_t(\sigma^2) +\partial_t H^2_t (\tau^1)-\partial_t H^2_t (\tau^2)  \bigr)\bigr\ra_t
    \\&= \e \bigl\la  R(\sigma^1,\tau^1)^2 \bigl( -2^{-1}e^{-t} (H_J(\sigma^1)-H_J(\sigma^2)) \\&\qquad \qquad +2^{-1}e^{-t}(1-e^{-t})^{-1/2}(H_{J^1}(\sigma^1)-H_{J^1}(\sigma^2))\bigr)\bigr\ra_t
    \\&\quad + \e \bigl\la  R(\sigma^1,\tau^1)^2 \bigl( -2^{-1}e^{-t} (H_J(\tau^1)-H_J(\tau^2)) 
    \\&\qquad \qquad +2^{-1}e^{-t}(1-e^{-t})^{-1/2}(H_{J^2}(\tau^1)-H_{J^2}(\tau^2))\bigr)\bigr\ra_t.
\end{align*}
Theorem 4 in \cite{CP14} implies that \[\e \bigl\la |H_{J}(\sigma^1)- H_{J}(\sigma^2)|^2 \bigr\ra_t \le \frac{2\sqrt{2} \cdot |E|^{3/2}}{\beta},\] and similar results for $J$ replaced with $J^1$ and $J^2$, or $\sigma $ replaced with $\tau.$
Therefore, taking the absolute value of the derivative and using Jensen's inequality, \begin{align*}
    \beta^{-1}\frac{d}{dt}\e \la R(\sigma,\tau)^2\ra_t &\le e^{-t}(1+(1-e^{-t})^{-1/2}) \bigl(\e \bigl\la |H_J(\sigma^1)-H_J(\sigma^2)|^2 \bigr\ra_t \bigr)^{1/2}
    \\&\le 2^{3/4} e^{-t}(1+(1-e^{-t})^{-1/2}) \frac{|E|^{3/4}}{\sqrt{\beta}}.
\end{align*}
By the fundamental theorem of calculus,
\begin{align*}
    \e \la R(\sigma,\tau)^2\ra_t &\ge \e \la R(\sigma,\tau)^2\ra_0 -\int_0^t 2^{3/4} e^{-u}(1+(1-e^{-u})^{-1/2}) \sqrt{\beta}|E|^{3/4} du
    \\ & =  \e \la R(\sigma,\tau)^2\ra_0 - 2^{3/4}\bigl(1-e^{-t}+2\sqrt{1-e^{-t}}\bigr)    \sqrt{\beta}|E|^{3/4}
     \\ & \ge \e \la R(\sigma,\tau)^2\ra_0 - 6\sqrt{t} \sqrt{\beta}|E|^{3/4}.
\end{align*}
This implies that if $t$ is small enough such that the second term is less than or equal to $\frac{1}{2}\e \la R(\sigma,\tau)^2\ra_0$, i.e., \[t \le 12^{-2}\beta^{-1}|E|^{-3/2} (\e \la R(\sigma^1,\sigma^2)^2\ra)^2, \]  then $\e \la R(\sigma,\tau)^2\ra_t >2^{-1}\e\la R(\sigma,\tau)^2\ra_0.$
\end{proof}

\end{appendix}

\bibliographystyle{acm}

{\footnotesize\bibliography{references}}

\end{document}